\documentclass[10pt,reqno]{amsart}
\usepackage{latexsym, amsmath, amssymb}

% %\usepackage{showkeys}

\usepackage{bm}
\usepackage{graphics,epsfig,graphicx,graphics}
\usepackage{color, xcolor}
\usepackage{tikz}
\usetikzlibrary{patterns}
\usepackage[english]{babel}

\usepackage{hyperref}
%%%%%%%%%%%%
\setlength{\textwidth}{140mm} \setlength{\textheight}{200mm}
\setlength{\oddsidemargin}{11mm} \setlength{\evensidemargin}{11mm}
%%%%%%%%%%%%%%%%%%%%%%
\newtheorem{thm}{Theorem}[section]

\newtheorem{lem}[thm]{Lemma}
\newtheorem{prop}[thm]{Proposition}

\newtheorem{rem}[thm]{Remark}
%%%%%%%%%%%%%%%%%%%%%%%%%%%%%%%%%%
% Define Symbols
%%%%%%%%%%%%%%%%%%%%%%%%%%%%%%%%%

\newcommand{\RR}{\mathbb{R}}
\newcommand{\rn}{\RR^N}

\newcommand{\bho}{B_{H_0}}
\newcommand{\bhor}[1]{B_{H_0}({#1})}
\newcommand{\bhorx}[2]{B_{H_0}({#2},{#1})}

\newcommand{\diver}{{\rm div}}
\newcommand{\haus}{\mathcal{H}^{N-1}}
\newcommand{\Hcap}{{\rm Cap_H}}
\newcommand{\Hcal}{{\sf M}_H}%%{{\bm{\zeta}}_H} %%{{{\sf H}}_H} 

\newcommand{\KK}{\mathcal{K}^N}

\newcommand{\la}{\lambda}
\newcommand{\nxi}{\nabla_\xi}
\newcommand{\Om}{\Omega}
\newcommand{\pa}{\partial}
\newcommand{\Sbb}{\mathbb{S}}
\newcommand{\sdue}[1]{S^2_{ij}(#1)}
\newcommand{\sign}{{\rm sign}}
\newcommand{\tr}{{\sf{Tr}}}
\newcommand{\W}[1]{\nxi^2 V(Dv(#1)) \, D^2 v(#1)}
%%%%%%%%%%%%%%%%%%%%%%%%%%%%%%%%%%%%%%%%%%%%
% 
%%%%%%%%%%%%%%%%%%%%%%%%%%%%%%%%%%%%%%%%%%%%

%%%%%%%%%%%%%%%%%%%%%%%%%%%%%%%%%
% Define Operations
%%%%%%%%%%%%%%%%%%%%%%%%%%%%%%%%%

%\newenvironment{proof}{
%  \noindent{\it Proof.}\ }{\hspace*{\fill}
%  \begin{math}\Box\end{math}\medskip}
%\newcommand{\qed}{\hfill $\Box$ \medskip}

%%%%%%%%%%%%%%%%%%%%%%%%%%%%%%%%%%%%%%%%%%%%%%%%%%%%%%%%%%%%%%%%%%%%
\begin{document}
\title{An overdetermined problem for the anisotropic capacity}

\author[C. Bianchini]{Chiara Bianchini}
\author[G. Ciraolo]{Giulio Ciraolo}
\author[P. Salani]{Paolo Salani}

\address{C. Bianchini, Dip.to di Matematica e Informatica ``U. Dini'', Universit\`a degli Studi di Firenze, Viale Morgagni 67/A, 50134 Firenze - Italy}
\email{cbianchini@math.unifi.it}

\address{P. Salani, Dip.to di Matematica e Informatica ``U. Dini'', Universit\`a degli Studi di Firenze, Viale Morgagni 67/A, 50134 Firenze - Italy}
\email{paolo.salani@unifi.it}

\address{G. Ciraolo,  Dip.to di Matematica e Informatica, Universit\`a di Palermo, Via Archirafi 34, 90123, Palermo - Italy}
\email{giulio.ciraolo@unipa.it}

%\author{Chiara Bianchini\thanks{\footnotesize Dipartimento di Matematica ed Informatica ``U.~Dini'',
%Universit\`a di Firenze, Viale Morgagni 67/A, 50134 Firenze, Italy. (chiara.bianchini@math.unifi.it).}
%\and
%Giulio Ciraolo\thanks{\footnotesize  Dipartimento di Matematica e
%Informatica, Universit\`a di Palermo Via Archirafi 34, 90123, Palermo, Italy
%  (giulio.ciraolo@unipa.it).}
%\and
%Paolo Salani\thanks{\footnotesize Dipartimento di Matematica ed Informatica ``U.~Dini'',
%Universit\`a di Firenze, Viale Morgagni 67/A, 50134 Firenze, Italy. (salani@math.unifi.it).} }

\maketitle

\begin{abstract}
We consider an overdetermined  problem for the Finsler Laplacian in the exterior of a convex domain in $\rn$, establishing a symmetry result for the anisotropic capacitary potential. Our result extends the one of W. Reichel [Arch. Rational Mech. Anal. 137 (1997)], where the usual Newtonian capacity is considered, giving rise to an overdetermined problem for the standard Laplace equation. Here, we replace the usual Euclidean norm of the gradient with an arbitrary norm $H$.  The resulting symmetry of the solution is that of the so-called Wulff shape (a ball in the dual norm $H_0$).
\end{abstract}

\noindent {\footnotesize {\bf AMS subject classifications.} 35J25, 35A23, 31B15, 35B65.}

\noindent {\footnotesize {\bf Key words.} Finsler Laplacian, Anisotropy, Capacity, Minkowski inequality.}

%%%%%%%%%%%%%%%%%%%%%%%%%%%%%%%%%%%%%%%%%%%%%%%%%
%%%%%%%%%%%%%%%%%%%%%%%%%%%%%%%%%%%%%%%%%%%%%%%%%

\section{Introduction}

The \emph{Newtonian capacity} of a bounded open set $\Omega$ in $\rn$, $N\geq 3$, is defined as 
\begin{equation}\label{cap}
{\rm Cap}(\Omega)=\inf \left\{\int_{\rn} \frac12 |Dv|^2 dx\ :\ v\in C^{\infty}_0(\rn),\,\,v\ge 1\,\text{ in }\Omega\right\}\,,
\end{equation}
where $Dv$ is the gradient of the function $v$ and $|\cdot|$ denotes the Euclidean norm in $\rn$.

When $N=3$, ${\rm Cap}(\Omega)$ represents the capacitance (i.e. ability to hold electrical charge) of the condenser $\Omega$ immersed in an isotropic dielectric, that is the total charge $\Omega$ can hold while maintaining a given potential energy (computed with respect to an idealized ground at infinity).

When $\Omega$ is a sufficiently smooth domain, the capacity problem \eqref{cap} admits a unique minimizer.
In fact, since Laplace equation is the Euler equation of the involved functional, this minimum problem is completely equivalent to the following Dirichlet problem
\begin{equation}\label{pbcap1}
\begin{cases}
\Delta u=0 &\qquad\text{in }\rn\setminus\overline{\Omega} \,,\\
u=1 &\qquad\text{on }\partial\Omega \,, \\
u\to 0&\qquad\text{if }|x|\to\infty \,.
\end{cases}
\end{equation}
Here, the function $u$ represents the electrostatic potential and one can ask whether there exists a set $\Omega$ such that the intensity of the corresponding electrostatic field $Du$ is constant on its boundary. This is equivalent to couple problem \eqref{pbcap1} with the extra condition
\begin{equation} \label{1ter}
|D u|=C \quad \textmd{on } \partial\Omega \,.
\end{equation} 

Since both Dirichlet and Neumann boundary conditions are imposed, the resulting problem \eqref{pbcap1}-\eqref{1ter} is {\em overdetermined} and then, in general, it is not well-posed and a solution does not exist, unless the domain $\Omega$ satisfies some additional symmetry property.
And indeed in \cite{Re} Reichel proved that \eqref{pbcap1}-\eqref{1ter} admits a solution if and only if  $\Omega$ is a ball. In other words, Euclidean balls are the only electrical conductors  such that (when embedded in an isotropic dielectric) the intensity of the corresponding electrostatic field is constant on the boundary. 

The technique used in \cite{Re} is the well-known {\em moving plane method}, which goes back first to Alexandrov and then to Serrin. The latter, in the seminal paper \cite{Se}, combined the geometric argument of Alexandrov with a smart refinement of the maximum principle to study the archetypal overdetermined problem
\begin{equation}\label{serrinpb_isotrop}
\begin{cases}
\Delta u=1 &\qquad\text{in } \Omega \,,\\
u=0 &\qquad\text{on } \Omega \,,\\
|Du|=C &\qquad\text{on } \Omega \,,
\end{cases}
\end{equation}
which is related to the minimization of the functional 
$$
\int_{\Omega}\left( \frac 12|Du|^2-u \right).
$$
Serrin proved that a solution to problem \eqref{serrinpb_isotrop} exists if and only if $\Omega$ is an Euclidean ball (and hence $u$ is radially symmetric). 
\medskip

Notice that, in both problems \eqref{serrinpb_isotrop} and  \eqref{pbcap1}-\eqref{1ter}, the radial symmetry of the solution is compelled by the isotropy of the Euclidean norm and of the Laplacian.
Considering in particular problem \eqref{pbcap1}, we see that the Laplace operator reflects the linearity of the electrical conduction law, which is in turn determined by the {\em isotropy of the dielectric} and dictates 
the use of the {\em Euclidean norm} in measuring the electric field in condition \eqref{1ter}. In this paper we investigate what happens if one considers an \emph{anisotropic dielectric} background which influences the organization of electric charges and affects the measure of the intensity of the electric field.
To this aim, in problem (\ref{cap})
we replace the Euclidean norm with a {\em generic norm $H$ which reflects the anisotropy of the medium}, thus defining the \emph{$Finsler H$-capacity} as follows
\begin{equation}\label{capminpbintro}
\Hcap(\Omega)=\inf \left\{\int_{\rn} \frac12\, H(Dv)^2 dx\ :\ v\in C^{\infty}_0(\rn),\,\,v\ge 1\,\text{ in }\Omega\right\}.
\end{equation}
$\Hcap(\Omega)$ represents the anisotropic capacitance of the set $\Omega$, that is the total charge that the set $\Omega$ can hold while embedded in the considered anisotropic dielectric medium and maintaining a given potential energy with respect to an idealized ground at infinity. 

Under suitable regularity assumptions, as in the Euclidean case the capacity problem \eqref{capminpbintro} admits a unique minimizer in $W^{1,2}(\RR^N)$ and it is in fact equivalent to the following Dirichlet problem 
\begin{equation}\label{pb_capacity}
\begin{cases}
\Delta_Hu=0&\qquad\text{in }\rn\setminus\overline{\Omega},\\
u=1&\qquad\text{on }\partial\Omega,\\
u\to 0&\qquad\text{as }H_0(x)\to\infty \,,
\end{cases}
\end{equation}
where   
$$
\Delta_H u=\diver(H(Du)\nabla_\xi H(Du)) 
$$
is  the so called \emph{Finsler Laplacian} (associated to $H$) and $H_0$ is the dual norm of $H$ (see below for precise definitions and notation).
Then, as in \cite{Re}, we investigate the overdetermined problem arising when the additional constraint 
\begin{equation} \label{3bis}
H(Du)=C \quad \textmd{ on }  \partial\Omega
\end{equation} 
is imposed. The study of geometric properties and characterization of the solution of \eqref{pb_capacity}-\eqref{3bis} is in fact the main goal of the present work.

Clearly, since the associated metric is no more radially symmetric, we can not expect Euclidean balls to be solutions and  the results and techniques from \cite{Re} do not apply anymore. In particular, the classical moving plane method is no more suitable.
Indeed the shape of the set $\Omega$ and the geometry of the solution $u$ are governed by the norm $H$ and we need to use an ad hoc technique.
For this we adapt and merge the arguments of \cite{CRS} and \cite{CS}, that in turn both exploit and suitably arrange a method from \cite{BNST}.
In \cite{CRS} the authors improve the results of \cite{Re}, weakening the regularity assumptions on the set $\Omega$. In \cite{CS}, the authors 
consider the anisotropic version of the classical Serrin's problem \eqref{serrinpb_isotrop}
\begin{equation}\label{serrinpb}
\begin{cases}
\Delta_H u=1 &\qquad\text{in } \Omega \,,\\
u=0 &\qquad\text{on } \Omega \,,\\
H(Du)=C &\qquad\text{on } \Omega \,,
\end{cases}
\end{equation}
proving that, under suitable regularity assumption, a solution exists if and only if $\Omega$ has the so-called {\em Wulff shape} associated to $H$, i.e. 
it is a ball in the dual norm $H_0$. 
Here, we will prove the same symmetry property for problem \eqref{pb_capacity}-\eqref{3bis}, i.e. for the anisotropic version of \eqref{pbcap1}-\eqref{1ter}, in perfect analogy with the interplay between problems \eqref{serrinpb} and  \eqref{serrinpb_isotrop}.

%
%In his seminal paper \cite{Se}, Serrin considered the following overdetermined problem for interior domains:
%$$
%\begin{cases}
% -\Delta u = 1 &\qquad\text{ in }\Omega,\\
% u=0  &\qquad\text{ on }\pa\Omega,\\
%|Du|=C &\qquad\text{ on }\pa\Omega,
%\end{cases}
%$$
%and he proved that if a solution exists, than the set $\Omega$ must be a ball and the function $u$ is radial.
%This result has been extended to the anisotropic case in \cite{CS}, by considering an anisotropic elliptic operator with quadratic growth as governing operator, and the resulting symmetry of the domain and of the solution reflects the (anisotropic) symmetry of the operator.
%
%Other symmetry results have been obtained for different kinds of overdetermined problems \todo{aggiungere referenze}. 
%In particular in \cite{Re} the following capacity problem is considered:
%$$
%\begin{cases}
%\Delta u=f, & \textmd{in } \RR^N \setminus \overline{\Om},\\
% u=1, & \textmd{on } \pa \Om,\\
% |Du|=C  & \textmd{on } \pa \Om,\\
% u(x) \to 0, & \textmd{for } |x| \to +\infty,
% \end{cases}
%$$
%and it is  proved that if a solution exists then $\Omega$ is a ball and the function $u$ is radial.
%
%In this paper we consider the anisotropic version of capacity problem and we show that, as it happens in \cite{CS} for interior domains, the resulting symmetry of the domain and of the solution is not radial (in the classical sense) but it reflects the anisotropy of the operator and in particular the set $\Omega$ is  a ball with respect to a non-Euclidean norm.

\subsection{Main results}

Let $N\geq 3$ and  $H: \rn \to \RR$ be a norm in $\rn$, that is a nonnegative positively homogeneous convex function;
more explicitly:
\begin{itemize}
\item[(i)] $H$ is convex;
\item[(ii)] $H(\xi) \geq 0$ for $\xi \in \rn$ and $H(\xi)=0$ if and only if $\xi=0$;
\item[(iii)] $H(t\xi) = |t| H(\xi)$ for $\xi \in \rn$ and $t\in \RR$.
\end{itemize}
Then let $H_0$ be the dual norm of $H$, that is
\begin{equation}\label{defH0}
H_0(x)=\sup_{\xi\neq 0}\frac{\langle {x};{\xi}\rangle}{H(\xi)}\quad\text{ for
}x\in\rn\,.
\end{equation}
We denote by
$B_{H}(1)$ and $B_{H_0}(1)$ the unitary balls in the norm $H$ and $H_0$ respectively; in general, for $r>0$, we set
$$
B_{H}(r)=\{\xi\in\rn\,\:\,H(\xi)<r\}\,,\qquad B_{H_0}(r)=\{x\in\rn\,\:\,H_0(x)<r\}\,.
$$
We say that a set has {\em the Wulff shape of }$H$ if it is a ball in norm $H_0$.

Given a smooth function $u$, we will use $H_0$ to measure the norm of $x\in\rn$ and $H$ to measure the norm of $D u(x)$ (then $H$ endows in fact  the dual of $\rn$, that coincides however with $\rn$).
The \emph{Finsler Laplacian} (associated to $H$) of the function $u$ is given by
\begin{equation}\label{DeltaH=}
\Delta_H u = \diver \big(H(Du) \nxi H(Du)\big).
\end{equation}
The Finsler Laplacian have been widely investigated in literature and goes back to Wulff \cite{Wu}, who considered it to describe the theory of crystals. Many other authors developed the related theory in several settings, considering both analytic aspects (see \cite{FK, CFV, CFV2, GdP1, GdP2, GdP3, WX1, WX2, CS}) and geometric points of view (see \cite{HLMG, EFT, FMP}).

In this paper we will study the anisotropic capacity problem (\ref{pb_capacity}) and the associated overdetermined problem \eqref{pb_capacity}-\eqref{3bis}. In particular, our main result is the following.

\begin{thm}\label{theorem Omega ball}
Let $\Omega$ be a bounded convex domain of class $C^{2,\alpha}$.  Let $H\in C^{2,\alpha}$ in $\rn\setminus\{O\}$ be a norm in $\rn$ such that $H^2$ is uniformly convex.
\par
Problem \eqref{pb_capacity}-\eqref{3bis} admits a solution $u$ if and only if $\Omega$ has the Wulff shape of $H$, i.e. $\Omega=B_{H_0}(r)$ for some $r>0$ (up to a translation) and $u$ is given by 
\begin{equation} \label{5bis}
u(x) = \left( \frac{H_0(x)}{r} \right)^{2-N}, \quad x \in \rn \setminus \Om \,.
\end{equation}
\end{thm}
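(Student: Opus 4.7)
\smallskip

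\noindent\emph{Plan of the proof.} The ``if'' direction is a direct verification: the homogeneity of $H_0$ and the identity $H(\nxi H_0(x))\equiv 1$ for $x\neq O$ imply that $u(x)=(H_0(x)/r)^{2-N}$ is $\Delta_H$-harmonic (being proportional to the fundamental solution of $\Delta_H$), equals $1$ on $\pa\bhor{r}$, vanishes at infinity, and satisfies $H(Du)\equiv (N-2)/r$ on $\pa\bhor{r}$.

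For the converse, my plan is to adapt the $P$-function strategy used in \cite{CS} for the anisotropic Serrin problem and in \cite{CRS} for the Euclidean capacity overdetermined problem, both modeled on the integral-identity argument of \cite{BNST}. First I would collect the necessary analytic ingredients: $C^{2,\al}$ regularity of the solution $u$ of \eqref{pb_capacity} up to $\pa\Om$, the strict bound $0<u<1$, the lower bound $H(Du)>0$ near $\pa\Om$ (via the Hopf lemma together with the overdetermined condition, which ensures that the equation is non-degenerate there), and the precise asymptotic expansion
$$u(x)=\gamma\,H_0(x)^{2-N}+o\bigl(H_0(x)^{2-N}\bigr),\qquad Du(x)=O\bigl(H_0(x)^{1-N}\bigr),\qquad H_0(x)\to\infty,$$
for some positive $\gamma$ controlled by $\Hcap(\Om)$; this is obtained by comparison with the explicit Wulff-shape solutions built on Wulff balls sandwiching $\Om$.

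The core object is the scale-invariant $P$-function
$$P(x)=u(x)^{-\frac{2(N-1)}{N-2}}\,H(Du(x))^2,$$
which is identically equal to $(N-2)^2/r^2$ on the explicit solution \eqref{5bis}. Using $\Delta_H u=0$, the $2$-homogeneity of $V:=\tfrac12 H^2$ (so that $\nxi V(\xi)=H(\xi)\nxi H(\xi)$), and a Cauchy--Schwarz inequality for the Hessian $D^2 u$ weighted by $\nxi^2 V(Du)$, I expect to obtain a differential inequality $\mathcal{L}P\ge 0$ for a linear (possibly degenerate) elliptic operator $\mathcal{L}$ whose coefficients involve $\nxi^2 V(Du)$, with equality in the key matrix inequality characterizing exactly those $u$ whose level sets are homothetic Wulff spheres. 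The boundary value $P\equiv C^2$ on $\pa\Om$ (from the overdetermined condition) and the limit $P\to (N-2)^2\gamma^{-2/(N-2)}$ at infinity (from the asymptotic expansion), coupled with a Pohozaev-type integral identity obtained by integrating $P$ against a suitable weight on the annulus $\{H_0(x)<R\}\setminus\Om$ and passing to the limit $R\to\infty$, should force $P$ to be constant. The rigidity of the equality case in the Cauchy--Schwarz step then forces $u$ to depend on $x$ only through $H_0$, and the boundary data identify $r$ and give $\Om=\bhor{r}$ together with \eqref{5bis}.

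The main obstacle I anticipate is establishing the anisotropic $P$-function inequality $\mathcal{L}P\ge 0$: since $V$ is $2$-homogeneous, the matrix $\nxi^2 V(\xi)$ has $\xi$ in its kernel, so the argument requires a careful decomposition of $D^2 u$ along and orthogonal to $Du$ and crucially relies on the uniform convexity of $H^2$ assumed in the statement to control the orthogonal components. A secondary technical point is the justification of the vanishing of boundary contributions at infinity in the integral identity, which is precisely the role of the decay estimates derived in the first step; here the convexity of $\Om$ (via the $H$-convexity of its level sets) is what makes the comparison with Wulff-shape sub/supersolutions effective.
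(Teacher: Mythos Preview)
Your plan is in the right spirit—an integral-identity argument paired with a pointwise matrix inequality, as in \cite{BNST,CS,CRS}—but it misses the two convex-geometric ingredients that are specific to the \emph{exterior} anisotropic problem, and it contains a small misconception. On the latter: $\nxi^2V(\xi)$ does \emph{not} have $\xi$ in its kernel. Since $V=\tfrac12 H^2$ is $2$-homogeneous, Euler's relation gives $\nxi^2V(\xi)\,\xi=\nxi V(\xi)\neq 0$, and under the uniform convexity of $H^2$ the matrix $\nxi^2V$ is positive definite away from the origin (it is $\nxi^2H$ that annihilates $\xi$, see \eqref{D2Hxi}). So the degeneracy you anticipate in deriving $\mathcal LP\ge 0$ is not the obstacle.

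The real gap is the step ``a Pohozaev-type identity \dots\ should force $P$ to be constant''. The paper works with an auxiliary power $v$ of $u$ solving $\Delta_Hv=\frac{N}{v}V(Dv)$ and with the matrix $W=\nxi^2V(Dv)\,D^2v$; computing $\int v^{-N}S^2_{ij}(W)V_{\xi_i}v_j$ in two ways and applying the generalized Newton inequality $S_2(W)\le\frac{N-1}{2N}(\tr W)^2$ produces
\[
P_H(\Om)\ \le\ \frac{N-2}{C}\int_{\pa\Om}\frac{\Hcal}{N-1}\,H(\nu)\,d\haus\,.
\]
For the interior Serrin problem the analogous boundary term collapses to known quantities and equality follows at once; here it does not, because the right-hand side genuinely involves the anisotropic mean curvature of $\pa\Om$. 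The reverse inequality that closes the argument is precisely the \emph{anisotropic Minkowski inequality}
\[
P_H(\Om)^2\ \ge\ N|\Om|\int_{\pa\Om}\frac{\Hcal}{N-1}\,H(\nu)\,d\haus\,,
\]
which the paper derives from the Aleksandrov--Fenchel inequalities for mixed volumes (Proposition~\ref{mink-ineq-prop}). A bare Pohozaev identity cannot manufacture this comparison for you. Moreover, even once equality in Newton yields $W=\gamma(x)\,Id$, the coefficient $\gamma=V(Dv)/v$ is known to be constant only on $\pa\Om$ (from the overdetermined data), not throughout $\rn\setminus\overline\Om$; the paper therefore does not attempt to show that $u$ depends only on $H_0$, but instead reads off from $W=\gamma\,Id$ and \eqref{Hlaplacian-curvatura} that $\Hcal(\pa\Om)$ is constant and then invokes the \emph{anisotropic Aleksandrov theorem} (Theorem~\ref{AleksandrovThm}). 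Your proposed shortcut—rigidity forces $u$ to depend only on $H_0$—would require $P$ to be globally constant, and that is exactly what the argument does not deliver without these two inputs from convex geometry.
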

We remark that in our assumptions on $H$ the solution of problem \eqref{pb_capacity} turns out to be classical, as shown in Theorem \ref{thm properties u} below.

Notice that the value of the constant $C$ in \eqref{3bis} must be suitably related to the geometry of the set $\Omega$ and a direct calculation (see the Appendix \ref{appendixC}) gives
\begin{equation}\label{C=}
C= \frac{(N-2)}N \frac{P_H(\Om)}{|\Om|},
\end{equation}
where $P_H$ indicates the so called anisotropic perimeter (see (\ref{anis_surf_energy}) below for its definition).

The proof of Theorem \ref{theorem Omega ball} is based on integral identities and a pointwise inequality (in the same spirit of \cite{CS} and \cite{CRS}) and can be summarized as follows. We introduce an auxiliary function $v=u^{\frac 2{N-2}}$ and prove that $v$ is quadratic in the norm $H_0$. Indeed,
by using an anisotropic version of the Minkowski inequality (see Proposition \ref{mink-ineq-prop}) and the characterization of the equality case in a generalized Newton inequality, we obtain that $\Omega$ has constant anisotropic mean curvature, and then the anisotropic Aleksandrov Theorem (Proposition \ref{AleksandrovThm}) guarantees that $\Omega$ is a ball in the suitable anisotropic metric.

\medskip

In order to apply our strategy we need several preliminary results.

First we show that if $\Om$ is a bounded convex domain with boundary of class $C^{2,\alpha}$ and $H$ is a norm of class $C^{2,\alpha}(\RR^N \setminus \{O\})$ with $H^2$ strictly convex, then Problem (\ref{pb_capacity}) admits a unique solution $u$, with $u \in C^{2}(\RR^N \setminus \Om)$ (see Theorem \ref{thm properties u}).
The proof is based on the fact that the differential problem (\ref{pb_capacity}) is the Euler equation of the minimum problem (\ref{capminpbintro}), which involves a strictly convex and differentiable functional.

Several tools from convex geometry are also needed. In particular, we will extensively use mixed volumes and mixed area measures to prove an anisotropic Aleksandrov-Fenchel inequality.

In Theorem \ref{AleksandrovThm} we also prove the anisotropic version of Aleksandrov Theorem. This result was already available in literature (see \cite{HLMG} and \cite{HeLi}). However, since our approach is simple and is in the same spirit of the proof of Theorem \ref{theorem Omega ball}, we prefer to include the proof in the paper.

%Moreover, (see Theorem \ref{theorem Omega ball}) we prove that if there exists a solution {$u \in C^{2,\alpha}(\overline{\RR^N \setminus{\Om}})$} of \eqref{pb capacity} such that
%\begin{equation}\label{Hnablau const}
%H(D u) =C,\quad \textmd{ on } \pa \Om,
%\end{equation}
%for some $C>0$, then (up to translations) it must hold that $\Omega$ is a ball with respect to the $H_0$ norm, that is $\Om = rB_{H_0}(1)$,  and
%\begin{equation*}
%u(x) = \left( \frac{H_0(x)}{r} \right)^{2-N}, \quad x \in \rn \setminus \Om.
%\end{equation*}
\medskip

The paper is organized as follows. Section \ref{section_preliminaries} is devoted to recall and prove some preliminary results which will be useful in the proof of Theorem \ref{theorem Omega ball}, in particular we recall some well-known properties of norms in $\rn$, facts on Finsler metrics and Finsler laplacian, prove some results on Finsler capacity and properties of the corresponding capacitary function. Section \ref{section_preliminaries} is completed by recalling the definition and some basic properties of elementary symmetric functions of a matrix and tools from convex geometry and proving a Minkowski type inequality which will be crucial in the proof of Theorem \ref{theorem Omega ball}.
Section \ref{sec_mainresult} is devoted to prove Theorem \ref{theorem Omega ball}.
In Appendix \ref{appendixC} we prove \eqref{C=} and in Appendix \ref{appendixB} we give a proof of the anisotropic version of Aleksandrov Theorem.

\section{Preliminaries} \label{section_preliminaries}

%%%%%%%%%%%%%%%%%%%%%%%%%%%%%%%%%%%%%%%%%%%%%%%%%
%%%%%%%%%%%%%%%%%%%%%%%%%%%%%%%%%%%%%%%%%%%%%%%%%
\subsection{{Notations}}
%%%%%%%%%%%%%%%%%%%%%%%%%%%%%%%%%%%%%%%%%%%%%%%%%%
%%%%%%%%%%%%%%%%%%%%%%%%%%%%%%%%%%%%%%%%%%%%%%%%%%%%
%We consider the space $\rn$ endowed with a generic norm $H$. 
For a subset $\Omega$ of $\rn$ we denote by $|\Omega|$ its volume, and by $\haus(\pa\Omega)$ the $(N-1)$-dimensional Hausdorff measure of $\pa\Omega$, that is its Euclidean perimeter, so that:
$$
|\Omega|=\int_{\Omega}d\mathcal{H}^{N}(x),\qquad \haus(\pa\Omega)=\int_{\pa\Omega}d\haus(x).
$$

For a convex set $\Omega\subset\rn$ the \emph{support function} $h(\Om,\cdot):\rn\to\RR$ of $\Omega$ is defined by
$$
h(\Om,u)=\sup_{y\in\Omega}\langle y;u\rangle,
$$
beeing $\langle \cdot; \cdot \rangle$ the standard scalar product.
The function $h$ is convex and $1$-homogeneous. Sometimes it is useful to consider its restriction to the $(N-1)$-dimensional unit sphere in $\rn$, which we denote by $\Sbb^{N-1}$.
Notice that we will indicate by $h_i, h_{ij}$ the derivatives of $h$ with respect to the $i$-th, $j$-th components of the variable $x\in\rn$.

%For a set $\Om$ of class $C^{2,+}$ we denote by $\nu(\Om,x):\pa \Om \to \Sbb^{N-1}$ its Gauss map, and by $\tau(\Om,\theta): \Sbb^{N-1} \to \pa \Om$ the inverse of the Gauss map, so that $\tau(\Om,\theta)$ denotes the point on $\pa\Om$ whose outer unit normal vector is $\theta$. 

The gradient  of a function $u:\Omega\to\rn$, evaluated at $x \in \Omega$, is the element $Du(x)$ of the dual space of $\rn$, also identified with $\rn$, which associates to any vector $y\in\rn$ the number $\langle y; Du(x)\rangle$. 
Unless otherwise stated, we will use the variable $x$ to denote a point in the ambient space $\RR^N$ and $\xi$ for an element in the dual space. The symbols $D$ and $\nxi$ will denote the gradients with respect to the $x$ and $\xi$ variables, respectively.

Accordingly, if the dual space of $\rn$ is equipped with the norm $H$, then $\rn$ turns out to be endowed with the dual norm $H_0$ given by \eqref{defH0}.

%We denote by $\bhor{r}$ the ball centered at the origin with radius $r$ in the norm $H_0$, i.e.
%\begin{equation*}
%B_{H_0}(r) = \{x \in \rn:\ H_0(x) < r\}.
%\end{equation*}
%Analogously, we define $B_{H}(r)=\{x \in \rn:\ H(x) < r\}$. 

Given a  convex set $\Omega$, we denote by $\nu=(\nu^1,\ldots,\nu^N)$ its outer unit normal vector. Moreover $\nu_j=(\nu_j^1,\ldots,\nu_j^N)$ will indicates the vector of derivatives with respect to the variable $x_j$.

Einstein summation convention is in use throughout the paper.

%$N$ dimensione.  $\rn$ (\verb!\rn!). 
%%$\Sbb$ (\verb!\Sbb!).
%$\pa \Om$ (\verb!\pa \Om!) bordo di $\Om$. 
%$P_H$ perimetro anisotropo. %$|\Om|$ volume. 
%$\haus$ (\verb!\haus!) misura hausd (N-1) dim.

%$\Hcal$ (\verb!\Hcal!) curvatura media anisotropa. 

%$h(\Om,\theta)$ funzione di supporto.

%$H$ norm. $H_0$ dual norm.  $V=\frac 1 2 H^2$.  

%$\Hcap$ (\verb!\Hcap!) H-capacit\`a.

%$\bho=\{x\in\rn\ :\ H_0(x)\le 1\}$ unit ball in the $H$-norm (\verb!\bho!).
%$\bhor{R}=R\bho$ (\verb!\bhor{R}!) $\bhorx{R}{x}=\bho(x,R)$ (\verb!\bhorx{r}{x}!)

%$u$ soluzione del pb. $v = u^{\frac 2{2-n}}$. $\Gamma$ soluzione fondamentale. 
%$\Delta_H u$, $\diver(u)$ (\verb!\diver!)

%$W(x) = \nxi^2 V(Dv(x)) \, D^2 v(x)$. $S_2(A)$ (\verb!\W{x}!). $\sdue(A)=S_2^{i,j}(A)$ (\verb!\sdue{A}!).
% $V(A_1,\ldots,A_N)$ volumi misti. $S(A_1,\ldots,A_{N-1};\theta)$. 

%%%%%%%%%%%%%%%%%%%%%%%%%%%%%%%%%%%%%%%%%%%%%%%%%
%%%%%%%%%%%%%%%%%%%%%%%%%%%%%%%%%%%%%%%%%%%%%%%%%

\subsection{Norms in $\rn$} \label{subsec_norm}
Let $H: \rn \to \RR$ be a norm in $\rn$, that is
\begin{itemize}
\item[(i)] $H$ is convex;
\item[(ii)] $H(\xi) \geq 0$ for $\xi \in \rn$ and $H(\xi)=0$ if and only if $\xi=0$;
\item[(iii)] $H(t\xi) = |t| H(\xi)$ for $\xi \in \rn$ and $t\in \RR$.
\end{itemize}

The dual norm $H_0$ is defined by (\ref{defH0}).
%\begin{equation*}
%H_0(x)=\sup_{\xi \neq 0} \frac{x\cdot \xi}{H(\xi)},\quad x \in \rn,
%\end{equation*}
%is the support function of $K^*$ (the polar set of $K$). 
Analogously, we can define $H$ in terms of $H_0$ as
\begin{equation*}
H(\xi) = \sup_{x\neq 0} \frac{\langle x ; \xi\rangle}{H_0(x)},\quad \xi \in \rn.
\end{equation*}
%Clearly, $H$ is the support function of $K^{**}=K$.
Notice that $H$ results to be the support function of the unitary ball $B_{H_0}(1)$ of $H_0$ and, in turn, $H_0$ is the support function of $B_H$,
that is
\begin{equation}\label{HhB}
H(\xi)=h(B_{H_0},\xi)\,\,\text{for }\xi\in\rn\,,\qquad H_0(x)=h(B_H,x)\,\,\text{for }x\in\rn\,,
\end{equation}
and  the convex sets $B_{H_0}$ and $B_H$ are polar of each other.

From \cite[Corollary 1.7.3]{Sc}, we have that
$H_0 \in C^1(\rn \setminus \{0\})$ if and only $B_{H}(1)$  is strictly convex. Moreover, we notice that if $H\in C^2(\RR^N\setminus\{0\})$ and $B_{H}(1)$ is uniformly convex (i.e. $H^2\in C^2_+(\RR^N\setminus\{0\})$), then the same holds for $H_0$ and $B_{H_0}(1)$. 

Since all norms in $\rn$ are equivalent, there exist positive constants $\sigma$ and $\gamma$ such that
\begin{equation} \label{norms equiv}
\sigma |\xi| \leq H(\xi) \leq \gamma |\xi|,\quad \xi \in \rn.
\end{equation}

Let $H \in C^1(\rn \setminus \{0\})$, by (iii)  we have 
\begin{equation*}
\nxi H(t\xi) = \sign(t) \nxi H(\xi), \quad \xi \neq 0,\  t\neq 0,
\end{equation*}
and
\begin{equation}\label{nablaH.xi}
 \langle\nxi H(\xi);\xi\rangle = H(\xi),\quad \xi \in \rn,
\end{equation}
where the left hand side is taken to be $0$ when $\xi = 0$. 
If $H \in C^2(\rn \setminus \{0\})$, then
\begin{equation*}
\nxi^2 H(t\xi) = \frac{1}{|t|} \nxi^2 H(\xi), \quad \xi \neq 0,\  t\neq 0,
\end{equation*}
%and
%\begin{equation*}
%\nabla^2 H^2(t\xi) = \frac{1}{|t|} \nxi^2 H^2(\xi), \quad \xi \neq 0,\  t\neq 0,
%\end{equation*}
where $\nxi^2$ is the Hessian operator with respect to the $\xi$ variable.
Hence, (\ref{nablaH.xi}) implies that
\begin{equation}\label{D2Hxi}
H_{\xi_i\xi_k}\xi_i=0 \,,
\end{equation}
for every $k=1,...,N$.

The following properties hold provided $H \in C^1(\rn \setminus \{0\})$ and $\bho$ is strictly convex (see \cite[Section 3.1]{CS}):
\begin{eqnarray}
&& H_0(\nxi H(\xi)) = 1, \quad \xi \in \rn\setminus \{0\}, \label{H0H=1} \\
&& H(D H_0(x)) = 1, \quad x \in \rn\setminus \{0\}; \label{HH0=1}
\end{eqnarray}
furthermore, the map $H \nxi H $ is invertible with
\begin{equation}\label{HnablaH inverse}
H\nxi H = (H_0 \nxi H_0)^{-1}.
\end{equation}
From \eqref{H0H=1} and the homogeneity of $H_0$, \eqref{HnablaH inverse} is equivalent to 
$$
H(\xi) D_\eta H_0(\nabla_\xi H(\xi)) = \xi \,.
$$
When $H$ and $H_0$ are of class $C^2(\RR^N\setminus\{0\})$, by differentiating this expression and using \eqref{nablaH.xi} and \eqref{D2Hxi}, we obtain
\begin{equation} \label{miracolo}
\nabla_\xi^2 V D_\eta^2 V_0 (\nabla_\xi H) = Id \,,
\end{equation}
where $V=H^2/2$ and $V_0=H_0^2/2$.

\subsection{Finsler metric} \label{subsec_metric}

Level sets of the norms $H$ or $H_0$ have a special role in the study of the anisotropic space, as well as Euclidean balls have in the Euclidean space.
More precisely we will say that a set $E$ is \emph{Wulff shape} of $H$ if there exist $t>0$ and $x_0\in\rn$ such that
$$
E=\{x\in\rn\ :\ H_0(x-x_0)\le t\};
$$
in other words, if it is a $H_0$-ball.
The set $E$ is then  denoted by $\bhorx{t}{x_0}$ where $x_0$ is the center and $t$ is the $H_0$-radius of the ball. When $x_0=0$, we simply write $\bhor{t}$ for $\bhorx{t}{0}$.

Notice that the unitary $H_0$-ball can be seen as the image of the function $\Phi:S^{N-1}\to\rn$ such that $\Phi(\xi)=\nabla_{\xi}H(\xi)$, thanks to the properties of the norm $H$ and its dual $H_0$.
%Moreover with this notation, we have
%With abuse of notation we extend $h(\Om;\cdot)$ to the whole $\rn$ by homogeneity, so that $h(\Om;k\theta)=k\;h(\Om;\theta)$, for every $\theta\in\Sbb^{N-1}$.

For a sufficiently regular set $\Om\subset \rn$ we denote by $P_H(\Om)$ its \emph{anisotropic perimeter}, or anisotropic surface energy, that is
\begin{equation}\label{anis_surf_energy}
P_H(\Om)=\int_{\pa \Om}H(\nu) d\haus(x).
\end{equation}
Obviously,  when $H$ is the Euclidean norm, then $P_H(\Om)$ is the usual perimeter of $\Omega$.

Following \cite{WX2}, the \emph{anisotropic mean curvature} of $\partial \Omega$, which we shall denote by $\Hcal$, is defined by
\begin{equation} \label{MHOm}  %%% PhD thesis (8) par 3
\Hcal(\Omega)= H_{\xi_i\xi_j}\nu_i^j \,.
\end{equation}
The anisotropic mean curvature arises for instance when one considers the anisotropic surface energy (\ref{anis_surf_energy}) of a hypersurface $\partial \Omega$,
so that $H(\nu)$ gives the unit energy per unit area of a surface element having normal $\nu$.
Since,
$$
|\Omega| = \frac{1}{N} \int_{\pa\Omega} \langle x ; \nu\rangle d \haus,
$$
if one considers the critical points of the shape operator $P_H(\cdot)$ for volume-preserving variations, then one obtains that they satisfy $\Hcal=constant$. 
We notice that if $H(\xi)=|\xi|$ then $\Hcal$ is the usual mean curvature normalized so that for the Euclidean unit sphere $B$ it holds $\Hcal(B)=(N-1)$. 

As it is well known, in the Euclidean setting the only compact constant mean curvature hypersurfaces without boundary are Euclidean balls (Aleksandrov's Theorem). In the Finsler metric an analogous result holds.
\begin{thm}[Anisotropic Aleksandrov's Theorem]\label{AleksandrovThm}
Let $H$ be a norm of $\rn$ of class $C^2(\rn \setminus \{O\})$ such that $H^2$ is uniformly convex, and let $\pa \Omega$ be a compact hypersurface without boundary embedded in Euclidean space of class $C^2$.
If $\Hcal(x)$ is constant for every $x\in \pa \Omega$ then $\Omega$ has the Wulff shape of $H$.
\end{thm}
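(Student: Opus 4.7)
The plan is to combine an anisotropic Minkowski-type identity with an anisotropic Heintze--Karcher inequality and then exploit the rigidity of the latter, in the same integral-identity plus pointwise-inequality spirit as the proof of Theorem~\ref{theorem Omega ball}. First I would derive the anisotropic Minkowski identity
\[
(N-1)\,P_H(\Om)\;=\;\int_{\pa\Om}\Hcal\,\langle x;\nu\rangle\,d\haus
\]
by computing the first variation of $P_H$ along the dilation $\Om\mapsto t\Om$ in two ways: homogeneity $P_H(t\Om)=t^{N-1}P_H(\Om)$ yields $(N-1)P_H(\Om)$ at $t=1$, while Hadamard's first variation formula, whose integrand is $\Hcal$ times the normal speed $\langle x;\nu\rangle$, yields the right-hand side.

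The second, harder ingredient is an anisotropic Heintze--Karcher inequality: if $\Hcal>0$ on $\pa\Om$,
\[
\int_{\pa\Om}\frac{H(\nu)}{\Hcal}\,d\haus\;\geq\;\frac{N}{N-1}\,|\Om|,
\]
with equality if and only if $\Om$ has the Wulff shape. I would prove it through the anisotropic torsion potential $w$ solving $\Delta_H w=N$ in $\Om$ with $w=0$ on $\pa\Om$: multiplying the PDE by $w$ and by $1$ and integrating by parts produces two integral identities, and applying a Newton-type pointwise inequality to the matrix $\nxi^2 V(Dw)\,D^2 w$ (whose trace equals $\Delta_H w\equiv N$) combines them into the stated estimate. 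In the equality case the Newton inequality forces this matrix to be a scalar multiple of the identity, which integrates up to the conclusion that $w$ is a quadratic polynomial in $H_0$, and hence that $\Om$ is a Wulff ball.

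Assuming now $\Hcal\equiv c$ and combining the two ingredients: the Minkowski identity together with the divergence theorem $\int_{\pa\Om}\langle x;\nu\rangle\,d\haus=N|\Om|$ gives $cN|\Om|=(N-1)P_H(\Om)$, while the Heintze--Karcher inequality rearranges to $P_H(\Om)/c\geq\tfrac{N}{N-1}|\Om|$. These force equality in Heintze--Karcher, and by the rigidity above we conclude $\Om=\bhor{r}$ up to translation. The main obstacle is precisely this rigidity step. Unlike in the Euclidean setting the matrix $\nxi^2 V(Dw)\,D^2 w$ is not a priori symmetric, so one must first symmetrize it via the positive definite square root of $\nxi^2 V$ before applying Newton's inequality to the resulting symmetric matrix. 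The uniform convexity of $H^2$ ensures that this square root exists and is regular, and the $C^{2,\alpha}$ regularity of $H$ off the origin then propagates the pointwise equality ``matrix $=$ scalar times identity'' through the foliation of $\Om$ by the level sets of $w$, yielding the geometric conclusion that each such level set is a concentric Wulff sphere.
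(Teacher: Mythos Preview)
Your approach is essentially the same as the paper's: both proofs solve the anisotropic torsion problem ($\Delta_H\psi=1$ in $\Om$, $\psi=0$ on $\pa\Om$), apply Newton's inequality to $W=\nxi^2 V(D\psi)D^2\psi$, integrate using the divergence form of $S_2(W)$ and \eqref{sdueVivj2} to get a boundary integral involving $\Hcal$, combine this with a Cauchy--Schwarz step and the Minkowski-type identity $P_H(\Om)=\tfrac{N}{N-1}\Hcal\,|\Om|$ (for constant $\Hcal$), and force equality. Packaging the intermediate estimate as a general anisotropic Heintze--Karcher inequality is a perfectly valid and slightly more general presentation, but the ingredients are identical.

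A few points where your outline diverges from the paper's execution:
\begin{itemize}
\item Your description ``multiply the PDE by $w$ and by $1$'' is not the right computation. Multiplying by $w$ yields $\int_\Om H^2(Dw)$, which is not what is needed; the crucial identity is rather $2\int_\Om S_2(W)=\int_{\pa\Om}\Hcal\,H(\nu)H^2(D\psi)\,d\haus$, obtained from \eqref{divS2=0} and \eqref{sdueVivj2}, followed by Cauchy--Schwarz with weight $H(\nu)$ to introduce $\int_{\pa\Om}H(\nu)/\Hcal$.
\item Your ``main obstacle'' (symmetrizing via the square root of $\nxi^2 V$) is unnecessary: Lemma~\ref{lemma-newton} in the paper is precisely a Newton inequality for products $A=BC$ with $B$ symmetric positive semidefinite and $C$ symmetric, together with its rigidity statement, so no preliminary symmetrization is needed.
\item For the rigidity step the paper does \emph{not} integrate $W=cI$ to conclude that $\psi$ is quadratic in $H_0$. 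Instead, the equality case in Cauchy--Schwarz forces $H(D\psi)$ to be constant on $\pa\Om$, so that $\psi$ solves the overdetermined anisotropic Serrin problem \eqref{serrinpb}; the conclusion then follows by citing \cite[Theorem~2.2]{CS}. Your alternative (integrating $W=I$ directly) can be made to work, but it requires an additional argument that you have not supplied: one has to pass from $D^2\psi=\nabla_\eta^2 V_0(\nxi H(D\psi))$ (via \eqref{miracolo}) to an explicit formula for $\psi$, which is a nontrivial ODE-type step.
\end{itemize}
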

A proof of the previous result can be found in  \cite{HLMG} and in \cite{HeLi}. In Appendix B we present an alternative proof which is more in the spirit of Reilly's proof \cite{Re} and of our proof of Theorem \ref{theorem Omega ball}.

%\begin{prop}[Anisotropic Isoperimetric inequality]
%	For every sufficiently regular set $\Omega\subset\rn$ of finite volume, it holds
%	$$
%	P_H(\Omega) \ge N |\bho|^{\frac 1N} |\Omega|^{\frac{N-1}N},
%	$$
%	where $H$ is a norm of class $C^{2,\alpha}(\rn\setminus\{O\})$, and equality holds if and only if $\Omega$ is Wulff shape of $H$.
%\end{prop}

%%%%%%%%%%%%%%%%%%%%%%%%%%%%%%%%%%%%%%%%%%%%%%%%%%%%%%%%%%%%%%%%%%%%%%%%%%%%%%%%%%%%%%%%%%%%%%%%%%
\subsection{Finsler Laplacian}\label{section-finslerlaplacian}

The Finsler Laplacian associated to a norm $H$ is the operator $\Delta_H$ defined by
$$
\Delta_H u=(H(Du)H_{\xi_i\xi_j}(Du)+H_{\xi_i}(Du)H_{\xi_j}(Du))\;u_{ij}.
$$
This operator extends the notion of Laplacian to the anisotropic space $\rn$ endowed with a generic norm $H$.
The classical Laplacian corresponds to $\Delta_H$ in the case $H$ is the Euclidean norm.

Notice that, thanks to the regularity and the homogeneity properties of the norm $H$, the Finsler Laplacian is a strictly elliptic operator; indeed
$$
(H(\xi) H_{\xi_i\xi_j}(\xi)+H_{\xi_i}(\xi)H_{\xi_j}(\xi))\xi_i\xi_j=H^2(\xi)\ge C |\xi|^2\,,
$$
where $C=\min\{H(\xi)\,:\,|\xi|=1\}$.

Several results, which are valid in the Euclidean case, hold true in the anisotropic case too; we here present only few of them.

Let $\alpha_N$ be the perimeter of the unit ball with respect to $H_0$. We refer to
\begin{equation}\label{Gamma def}
\Gamma(x) = \frac{H_0^{2-n}(x)}{\alpha_n (n-2)}
\end{equation}
as the fundamental solution of the Finsler Laplacian in $\rn$, $N \geq 3$, since $\Gamma$ solves
\begin{equation*}
-\Delta_H \Gamma = \delta_0,
\end{equation*}
where $\delta_0$ is the Dirac measure centered at the origin (see \cite{FK}). 

\begin{prop}[Weak Comparison Principle \cite{FK}]
Let $E$ be a bounded domain and assume that
\begin{equation*}
-\Delta_H u \leq -\Delta_H v \quad \textmd{in } E , \quad \textmd{and } u \leq v \ \textmd{ on } \pa E,
\end{equation*}
then
\begin{equation*}
u \leq v \quad \textmd{a.e. in } E.
\end{equation*}
\end{prop}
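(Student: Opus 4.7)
The plan is to prove the weak comparison principle via the standard convexity/monotonicity argument for variational operators, noting that $\Delta_H$ is the Euler--Lagrange operator associated with the strictly convex functional $v \mapsto \int \frac12 H(Dv)^2 \, dx$. Writing $V(\xi) = \frac12 H(\xi)^2$, observe that $\nabla_\xi V(\xi) = H(\xi)\,\nabla_\xi H(\xi)$, so the equation $-\Delta_H u \le -\Delta_H v$ in $E$ reads, in weak form,
\[
\int_E \langle \nabla_\xi V(Du) - \nabla_\xi V(Dv) ; D\varphi \rangle \, dx \le 0
\]
for every nonnegative test function $\varphi \in W^{1,2}_0(E)$.

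The key structural input is the strict monotonicity of the map $\xi \mapsto \nabla_\xi V(\xi)$, which follows from the assumption that $H^2 = 2V$ is (uniformly) convex: for all $\xi, \eta \in \RR^N$,
\[
\langle \nabla_\xi V(\xi) - \nabla_\xi V(\eta) ; \xi - \eta \rangle \ge 0,
\]
with equality if and only if $\xi = \eta$. (Under uniform convexity of $H^2$ one has in fact a quantitative lower bound $\ge c|\xi-\eta|^2$, but strict monotonicity is all that is needed.)

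The conclusion then follows by taking the admissible test function $\varphi = (u-v)^+ \in W^{1,2}_0(E)$, which is legitimate because $u \le v$ on $\partial E$ in the trace sense, so $\varphi$ vanishes on $\partial E$, and $\varphi \ge 0$. Since $D\varphi = D(u-v)$ on $\{u>v\}$ and $D\varphi = 0$ elsewhere, the weak inequality gives
\[
0 \ge \int_{\{u>v\}} \langle \nabla_\xi V(Du) - \nabla_\xi V(Dv) ; D(u-v) \rangle \, dx \ge 0.
\]
By the strict monotonicity inequality, the integrand is pointwise nonnegative, hence it must vanish a.e. on $\{u>v\}$, which forces $Du = Dv$ a.e. there. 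Therefore $D\varphi = 0$ a.e. in $E$, so $\varphi$ is constant, and since $\varphi = 0$ on $\partial E$ we conclude $(u-v)^+ \equiv 0$, i.e.\ $u \le v$ a.e.\ in $E$.

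The main obstacle is really only a technical one: justifying that $(u-v)^+$ is an admissible test function in the weak formulation, which requires $u, v \in W^{1,2}(E)$ with the correct trace comparison on $\partial E$, together with verifying that the vector field $\nabla_\xi V$ (which is only Lipschitz away from the origin, given that $H \in C^{2,\alpha}(\RR^N\setminus\{O\})$) produces $L^2_{\mathrm{loc}}$ integrands when composed with $W^{1,2}$ functions. Both facts are standard consequences of the equivalence of norms \eqref{norms equiv} and the $1$-homogeneity properties collected in Subsection \ref{subsec_norm}, so the conceptual core of the proof remains the convexity-monotonicity step outlined above.
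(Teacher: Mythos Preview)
The paper does not supply a proof of this proposition; it is merely quoted with a citation to \cite{FK}. So there is no ``paper's own proof'' to compare against. Your argument is the standard monotonicity proof for comparison principles of divergence-form operators with convex integrand, and it is correct: testing the weak inequality with $(u-v)^+\in W^{1,2}_0(E)$ and invoking the (strict) monotonicity of $\xi\mapsto\nabla_\xi V(\xi)$ forces $D(u-v)^+=0$ a.e., hence $(u-v)^+\equiv 0$ since $E$ is connected. This is exactly the argument one finds in \cite{FK}, so your write-up matches the cited source in spirit and detail.
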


In particular, the following maximum principle holds.
\begin{prop}[Maximum Principle \cite{FK}]
If $\Delta_H u = 0$ in $E$, then 
$$
\min_{\pa E} u \leq u(x) \leq \max_{\pa E} u,
$$
almost everywhere in $E$.
\end{prop}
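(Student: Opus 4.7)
The plan is to deduce the maximum principle directly from the Weak Comparison Principle stated just above by comparing $u$ with appropriate constant functions.

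First I would establish the upper bound. Set $M := \max_{\partial E} u$ and take $v \equiv M$ as a competitor. Since $v$ is constant, $Dv \equiv 0$, and consequently $H(Dv)\nabla_\xi H(Dv) \equiv 0$ (interpreting this product as $0$ at the origin, which is legitimate because $\nabla_\xi H$ is $0$-homogeneous and hence bounded, while $H(Dv)=0$), so $\Delta_H v = 0$ in $E$. Together with the hypothesis $\Delta_H u = 0$, this gives $-\Delta_H u \le -\Delta_H v$ in $E$; and by definition of $M$, we have $u \le M = v$ on $\partial E$. The Weak Comparison Principle then yields $u \le M$ almost everywhere in $E$.

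The lower bound is symmetric: set $m := \min_{\partial E} u$ and take $v \equiv m$. Repeating the previous reasoning with the roles of $u$ and $v$ interchanged (so the hypothesis becomes $-\Delta_H v \le -\Delta_H u$ and $v = m \le u$ on $\partial E$), the Weak Comparison Principle gives $m \le u$ almost everywhere in $E$. Combining the two inequalities produces the desired chain $\min_{\partial E} u \le u(x) \le \max_{\partial E} u$ a.e.\ in $E$.

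The only minor technical point — the main (and actually very small) obstacle — is checking that constants are genuine solutions of $\Delta_H v = 0$ in the sense required by the Weak Comparison Principle. The operator $\Delta_H = \operatorname{div}(H(Du)\nabla_\xi H(Du))$ nominally involves $\nabla_\xi H$, which is not defined at the origin; however, $|\nabla_\xi H(\xi)|$ remains bounded as $\xi \to 0$ by $0$-homogeneity, while $H(\xi) \to 0$, so the vector field $\xi \mapsto H(\xi)\nabla_\xi H(\xi)$ extends continuously by $0$ at the origin (in fact equals $\tfrac{1}{2}\nabla_\xi(H^2)(\xi)$, which is $C^1$ under the standing hypotheses on $H$). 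Hence plugging $Dv \equiv 0$ is unambiguous and $\Delta_H v = 0$ holds for every constant $v$, so the application of the Weak Comparison Principle is fully justified.
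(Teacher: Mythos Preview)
Your argument is correct: comparing $u$ with the constants $M=\max_{\partial E}u$ and $m=\min_{\partial E}u$ and invoking the Weak Comparison Principle is exactly the standard way to obtain this statement, and your handling of the singularity of $\nabla_\xi H$ at the origin via the continuous extension of $H\nabla_\xi H=\tfrac12\nabla_\xi(H^2)$ is appropriate. Note, however, that the paper does not supply its own proof of this proposition at all --- it is simply quoted from \cite{FK} --- so there is nothing to compare against; your derivation is a legitimate self-contained justification that the paper omits.
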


An analogous of the mean curvature formula for the Laplacian has been proved in \cite{WX1} where the anisotropic mean curvature $\Hcal$ has been taken into account.
\begin{prop}[\cite{WX1}]
Let $u$ be a regular function with a regular level set $S_t=\{x\in\rn\ :\ u(x)=t\}$. 
The following expression holds at every $x\in S_t$:
\begin{equation}\label{Hlaplacian-curvatura}
\Delta_H u= \Hcal(S_t) H(Du)+H_{\xi_i}H_{\xi_j}u_{ij}.
\end{equation}
\end{prop}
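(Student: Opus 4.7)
The plan is to reduce the claimed identity to a single algebraic fact: that on the level set $S_t$ one has $H_{\xi_i\xi_j}(Du)\,u_{ij}=\Hcal(S_t)$. Indeed, the very definition of the Finsler Laplacian given just above the statement reads
$$
\Delta_H u = H(Du)\, H_{\xi_i\xi_j}(Du)\,u_{ij} \;+\; H_{\xi_i}(Du)\,H_{\xi_j}(Du)\,u_{ij},
$$
so once the identification $H_{\xi_i\xi_j}(Du)\,u_{ij}=\Hcal(S_t)$ is established, multiplying it by $H(Du)$ and adding the second summand yields exactly \eqref{Hlaplacian-curvatura}. Thus the whole content of the proposition is a pointwise geometric computation on the regular level set.

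To carry this out, I would choose on $S_t$ the Euclidean outer unit normal $\nu = Du/|Du|$ (the sign plays no role in the final identity since $H_{\xi_i\xi_j}$ is even in its argument through the $1$-homogeneity of $H$). Differentiating gives
$$
\nu^j_{\,i} \;=\; \frac{u_{ij}}{|Du|} \;-\; \frac{u_j\,u_k\,u_{ki}}{|Du|^{3}}.
$$
Since $H\in C^2(\rn\setminus\{O\})$ is $1$-homogeneous, the entries $H_{\xi_i\xi_j}$ are $(-1)$-homogeneous, whence $H_{\xi_i\xi_j}(\nu)=|Du|\,H_{\xi_i\xi_j}(Du)$. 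Plugging this into the definition \eqref{MHOm} of the anisotropic mean curvature gives
$$
\Hcal(S_t) \;=\; H_{\xi_i\xi_j}(\nu)\,\nu^j_{\,i} \;=\; H_{\xi_i\xi_j}(Du)\,u_{ij} \;-\; \frac{H_{\xi_i\xi_j}(Du)\,u_j\,u_k\,u_{ki}}{|Du|^{2}}.
$$

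The closing step is the key cancellation: applying the Euler-type identity \eqref{D2Hxi} at $\xi=Du$ yields $H_{\xi_i\xi_j}(Du)\,u_j=0$ for every index $i$, so the second summand on the right vanishes identically and
$$
\Hcal(S_t) \;=\; H_{\xi_i\xi_j}(Du)\,u_{ij},
$$
which is exactly the reduction sought. The only place where something nontrivial happens is the combination of the $(-1)$-homogeneity of $H_{\xi_i\xi_j}$ with the orthogonality relation \eqref{D2Hxi}; both features are pure consequences of $H$ being a $C^2$ norm, and together they arrange for the normalization factor $|Du|$ and the tangential correction coming from $\nu^j_{\,i}$ to disappear simultaneously. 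I expect no serious obstacle beyond tracking these homogeneity weights correctly.
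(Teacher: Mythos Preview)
The paper does not actually prove this proposition; it is quoted from \cite{WX1} and stated without proof. Your argument is therefore not being compared against anything in the paper, but it is correct and is precisely the computation one would expect: expand $\Delta_H u$ using the definition, and identify the first summand $H(Du)H_{\xi_i\xi_j}(Du)u_{ij}$ with $H(Du)\,\Hcal(S_t)$ by exploiting the $(-1)$-homogeneity of $H_{\xi_i\xi_j}$ together with \eqref{D2Hxi}.

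One small caveat: your parenthetical claim that ``the sign plays no role'' because $H_{\xi_i\xi_j}$ is even is not quite right. If you replace $\nu$ by $-\nu$, then $H_{\xi_i\xi_j}(\nu)$ is indeed unchanged, but $\nu^j_{\,i}$ changes sign, so $\Hcal$ changes sign as well. The sign of $\Hcal$ is a genuine orientation convention, and the plus sign in \eqref{Hlaplacian-curvatura} is tied to the choice $\nu=Du/|Du|$ (outer normal to the sublevel set), exactly as you compute. This does not affect the validity of your derivation---your choice of $\nu$ is the correct one for the stated formula---but the side remark should be dropped or corrected.
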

%%%%%%%%%%%%%%%%%%%%%%%%%%%%%%%%%%%%%%%%%%%%%%%%%%%%%%%%%%%%%%%%%%%%%%%%%%%%%%%%%%%%%%%%%%%%%%%%%%
\subsection{Finsler Capacity}\label{section-finsler}
We recall that the \emph{Finsler capacity} or \emph{anisotropic capacity} of a convex bounded open set $\Om \subset \rn$ is defined by
\begin{equation}\label{capminpb}
\Hcap (\Om) = \inf \left\{ \frac{1}{2} \int_{\rn} H^2(Dv) dx:\ v \in C_0^\infty (\rn),\ v_{|_\Om} \geq 1\right\}.
\end{equation}
The function $u$ such that
$$
 \frac{1}{2} \int_{\rn} H^2(Du) dx = \Hcap(\Om)
$$ 
is called the $H$-\emph{capacitary potential}  of $\Om$ and it satisfies Problem \eqref{pb_capacity}, as we will show in Theorem \ref{thm properties u}.
% \begin{equation*}
% \begin{cases}
% \Delta_H u=0, & \textmd{in } \rn \setminus \overline{\Om},\\
% u=1, & \textmd{on } \pa \Om,\\
% u(x) \to 0, & \textmd{for } H_0(x) \to +\infty.
% \end{cases}
% \end{equation*}

The notion of capacity can be extended to the so called \emph{relative capacity}: the Finsler capacity of a convex bounded open set $\Om \subset \rn$ with respect to a superset $E\supset\Omega$ is defined by
\begin{equation}\label{capminpbOE}
\Hcap (\Om;E) = \inf \left\{ \frac{1}{2} \int_{E} H^2(Dv) dx:\ v \in C_0^\infty (E),\ v_{|_\Om} \geq 1\right\}.
\end{equation}

In the following, we show two prime examples which will be useful later.
Let us consider the radial case $\Om = B_{H_0}(r)$ and let $u_r$ be solution to \eqref{pb_capacity} in $\rn\setminus\overline{B_{H_0}(r)}$.
Since $\Gamma$ in \eqref{Gamma def} is the fundamental solution, it is clear that $u_r(x)=\alpha_N(N-2) r^{N-2} \Gamma(x)$, that is
\begin{equation}\label{v r}
u_r(x)= \frac{H_0^{2-N}(x)}{r^{2-N}}, \quad x \in \rn \setminus B_{H_0}(r).
\end{equation}
Moreover, we have that
\begin{equation}\label{H D vr}
H(Du_r(x)) = \frac{N-2}{r}, \quad \textmd{for } x \in \pa B_{H_0}(r).
\end{equation}

Another crucial example is the annular ring case, where the Finsler capacity of $B_{H_0}(r_1)$ with respect to $B_{H_0}(r_2)$ is considered, for $0< r_1 < r_2$.
The function
\begin{equation}\label{v R1 R2}
u_{r_1,r_2}(x) = \frac{H_0^{2-N}(x)-r_2^{2-N}}{r_1^{2-N} - r_2^{2-N}}
\end{equation}
minimizes Problem (\ref{capminpbOE}) and it solves the capacity problem in the ring
\begin{equation*}
\begin{cases}
\Delta_H u = 0, & \textmd{in } B_{H_0}(r_2) \setminus \overline{B}_{H_0}(r_1),\\
u=1, & \textmd{if } {H_0}(x)=r_1, \\
u=0, & \textmd{if } H_0(x)=r_2.
\end{cases}
\end{equation*}

In the following theorem we prove that Problem (\ref{capminpb}) for the Finsler Capacity is equivalent to  the differential problem (\ref{pb_capacity}) and we give some crucial estimates on the $H$-capacitary function $u$.
%%%%--------
\begin{thm} \label{thm properties u}
Let $\Om$ be a bounded convex domain with boundary of class $C^{2,\alpha}$ such that $O\in \Om$. Let $H$ be a norm of $\rn$ of class $C^{2,\alpha}(\rn \setminus \{O\})$ such that $H^2$ is uniformly convex.
There exists a unique solution $u$ to problem \eqref{pb_capacity}, $u \in C^{2}(\rn \setminus \Om)$ , and it satisfies the following properties:
\begin{itemize}
\item[(i)] $0<u<1$ in $\rn \setminus \overline{\Om}$;
\item[(ii)] there exist two positive constants $A_1$ and $A_2$ depending on $\Om$ such that
\begin{eqnarray}
A_1 \Gamma(x) \leq u(x),& \quad x \in \rn \setminus \Om,\\ \label{A Gamma leq u}
u(x) \leq A_2 \Gamma(x),& \quad x\in \rn\setminus{\bhor{R_1}},\label{u leq A Gamma}
\end{eqnarray}
where $\overline{\Omega} \subset\bhor{R_1}$ and $\Gamma$ is given by \eqref{Gamma def};
\item[(iii)] $H(Du) \neq 0$ in $\rn \setminus \Om$;
\item[(iv)] there exist positive constants $B_1,B_2$ and $B_3$ depending on $\Om$ such that
\begin{equation}\label{Du leq}
B_1 \frac{\Gamma(x)}{H_0(x)} \leq  H (Du(x)) \leq B_2 \frac{\Gamma(x)}{H_0(x)},
\end{equation}
and
\begin{equation}\label{D2u leq}
| D^2u(x)|  \leq \frac {B_3}{H_0^{N-2}(x)},
\end{equation}
for $x$ sufficiently far away from the origin.
\end{itemize}
\end{thm}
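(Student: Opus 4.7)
The strategy is to first obtain existence and uniqueness variationally and then derive the pointwise bounds by comparison with explicit radial solutions and by barrier/scaling arguments using the fundamental solution $\Gamma$ in \eqref{Gamma def}. For existence, I would solve \eqref{capminpb} in the natural Beppo--Levi space $\mathcal{D}^{1,2}(\rn) = \{v : Dv \in L^2(\rn),\, v \in L^{2^*}(\rn)\}$ over the convex constraint set $\Kcal_\Om = \{v \in \mathcal{D}^{1,2} : v \geq 1 \text{ a.e.\ on } \Omega\}$. The functional $J(v) = \tfrac{1}{2}\int H^2(Dv)\,dx$ is coercive and equivalent to the Dirichlet functional by \eqref{norms equiv}, and uniform convexity of $H^2$ makes it strictly convex; the direct method of the calculus of variations then yields a unique minimizer $u \in \Kcal_\Om$. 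Localized test functions give $\Delta_H u = 0$ in $\rn \setminus \overline{\Omega}$ with trace $u = 1$ on $\partial \Omega$, while the $L^{2^*}$-membership forces $u \to 0$ at infinity. The $C^{2,\alpha}$ regularity follows from the $C^{2,\alpha}$ structure of the operator (its linearization being uniformly elliptic by uniform convexity of $H^2$), together with the $C^{2,\alpha}$ smoothness of $\partial\Omega$ and the boundary data.

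For (i), the truncations $\min(u,1)$ and $\max(u,0)$ are admissible competitors, giving $0 \leq u \leq 1$; the strong maximum principle sharpens this to strict inequalities. For (ii), pick $r_1, R_1 > 0$ with $\bhor{r_1} \subset \Omega \subset \bhor{R_1}$, which is possible since $O \in \Omega$. The upper bound comes from comparing $u$ in $\rn \setminus \bhor{R_1}$ with the explicit radial solution $M(H_0(x)/R_1)^{2-N}$ from \eqref{v r}, where $M = \max_{\partial \bhor{R_1}} u$, via the weak comparison principle for $\Delta_H$. For the lower bound, the function $(H_0(x)/R_1)^{2-N}$ is Finsler-harmonic in $\rn \setminus \{O\}$, is $\leq 1$ on $\partial\Omega$ (since $H_0 \leq R_1$ there), and vanishes at infinity, so it lies below $u$ by comparison.

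For (iii), I would first apply Hopf's lemma to the uniformly elliptic equation satisfied by $u$ near $\partial\Omega$, concluding $H(Du) > 0$ there; then use the convexity of $\Omega$ to propagate nonvanishing of $Du$ to the whole exterior, via the Finsler analogue of Gabriel's theorem (convexity of the superlevel sets of capacitary potentials of convex bodies). For (iv) I would rescale: for $x$ with $H_0(x) = R$ large, set $\tilde u(y) = R^{N-2} u(Ry)$ on the fixed annulus $\{1/2 \leq H_0(y) \leq 2\}$. By the zero-homogeneity of $H$ under scalings, $\tilde u$ solves $\Delta_H \tilde u = 0$ in that annulus; by (ii), $\tilde u$ is bounded above and below by positive constants independent of $R$; combined with (iii) and compactness, $H(D\tilde u)$ is bounded above and below by positive constants on the middle annulus; interior Schauder estimates then control $D\tilde u$ and $D^2 \tilde u$ uniformly, and undoing the scaling yields \eqref{Du leq} and \eqref{D2u leq}.

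The main obstacle I expect is the interior part of (iii): Hopf handles the boundary $\partial\Omega$ and the rescaling handles the far field, but propagating nonvanishing of $Du$ through the intermediate annular region seems to require the anisotropic analogue of Gabriel--Lewis quasi-concavity for capacitary potentials of convex sets, which is delicate in the Finsler setting. A secondary nuisance is justifying that the formally infinite-domain minimization actually produces a decaying classical solution; this typically proceeds through an exhausting sequence of ring problems on $\bhor{R}\setminus\Om$ with zero data on $\partial \bhor{R}$, using the explicit annular solution \eqref{v R1 R2} as a comparison barrier, and passing to the limit as $R\to\infty$.
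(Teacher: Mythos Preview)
Your plan is essentially correct and closely parallels the paper's argument. A few points of comparison.

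\textbf{Existence.} The paper takes precisely the route you flag as your ``secondary nuisance'': it works with the exhausting sequence of ring problems on $\bhor{R}\setminus\overline{\Om}$, proves all the estimates for $u_R$ with constants independent of $R$, and passes to the limit. Your direct variational approach in $\mathcal{D}^{1,2}(\rn)$ is a legitimate alternative; the exhaustion method has the advantage that the comparison barriers \eqref{v R1 R2} are available on each bounded ring, and the estimates are inherited in the limit.

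\textbf{Parts (i) and (ii).} Your comparison arguments are essentially those of the paper (the paper compares $u_R$ with the annular solutions $u_{R_0,R}$ and $u_{R_1,R}$ and then lets $R\to\infty$, which is the bounded-domain version of your comparison with $(H_0/R_1)^{2-N}$).

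\textbf{Part (iii) and the lower bound in (iv).} You correctly identify this as the crux. The paper does not separate Hopf-at-the-boundary from interior quasiconcavity; instead it invokes Lewis \cite{Le}, Lemma~2, adapting the argument to the Finsler setting. That lemma gives, in one stroke, both $Du_R\neq 0$ \emph{and} the quantitative lower bound $H(Du_R)\geq B_1 H_0^{1-N}$ uniformly in $R$. This is exactly the ``Gabriel--Lewis'' ingredient you anticipate, and the paper treats it as a black box adaptable to $\Delta_H$. Your alternative route---prove $Du\neq 0$ qualitatively, then use compactness of the rescaled family $\tilde u$ to extract a uniform gradient lower bound---has a soft spot: a $C^1$ subsequential limit of the $\tilde u$ is only known to be a bounded Finsler-harmonic function on a fixed annulus, and the two-sided bound from (ii) alone does not force it to be a multiple of $H_0^{2-N}$, so you cannot immediately rule out a critical point. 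This can be repaired (e.g.\ by first upgrading (ii) to a genuine asymptotic $u\sim c\,\Gamma$), but the cleanest fix is to run Lewis's quantitative argument directly, as the paper does.

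\textbf{Part (iv), upper bounds.} Your rescaling $\tilde u(y)=R^{N-2}u(Ry)$ and appeal to interior elliptic estimates is exactly the paper's argument; once the gradient lower bound is in hand the operator has $C^{0,\alpha}$ coefficients and Schauder applies.
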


\begin{proof}
For every $r>0$, sufficiently large, let us define the function $u_r$ as the solution to the capacity problem in $\bhor{r}\setminus\overline{\Omega}$; that is
$$
\begin{cases}
\Delta_H u_r=0\qquad&\text{in }\bhor{r}\setminus\overline{\Omega},\\
u_r=1\qquad&\text{on }\pa\Omega,\\
u_r=0\qquad&\text{on }\pa\bhor{r}.
\end{cases}
$$
Notice that, by the strictly convexity of $H^2$ the function $u_r$ is the unique minimizing function for the capacity problem (\ref{capminpbOE}) related to the sets $\Omega$ and $\bhor{r}$.

Thanks to the comparison principle, if $r>s$ then $u_r(x)\ge u_s(x)$ for every $x\in\bhor{r}\setminus\overline{\Omega}$. 
Hence the function 
$$
u=\lim_{R\to\infty}u_R(x)
$$
is well defined, for $x\in\rn\setminus\overline{\Omega}$
and the sequence $u_R$ is in fact uniformly convergent. Then we are going to deduce the estimates (i)-(iv) for the functions $u_R$ and show that the involved constants do not depend on $R$, so that we will obtain the desired estimates for $u$ by passing to the limit as $R\to\infty$.

Let $0<R_0<R_1$ be given by   
$$
R_0=\sup\{r>0:\ \bhor{r} \subset \Om\}\quad\text{ and }\quad R_1=\inf\{r>0:\ \Om \subset \bhor{r} \}.
$$
Consider $u_{R_0,R}$ defined as in (\ref{v R1 R2}) in the ring $\bhor{R}\setminus\overline{\bhor{R_0}}$. 
By comparison principle it holds
\begin{equation}
u_R(x)\ge \frac{H_0^{2-N}(x)-R^{2-N}}{R_0^{2-N}-R^{2-N}},
\end{equation}
for every $x\in \bhor{R}\setminus\overline{\Omega}$ which implies
$$
u(x)\ge\frac{H_0^{2-N}(x)}{R_0^{2-N}} \,.
$$
for $x\in\rn\setminus\overline{\Omega}$ and hence inequality (\ref{A Gamma leq u}) holds.

On the other hand we can compare the function $u_R$ with $u_{R_1,R}$ defined as in (\ref{v R1 R2}) in the ring $\bhor{R}\setminus\overline{\bhor{R_1}}$ and we obtain inequality  (\ref{u leq A Gamma}) for $x\in\rn\setminus\overline{\bhor{R_1}}$, and in fact the same holds in $\RR^N\setminus \Omega$ since $u\leq 1$. 
Hence \emph{(i)} and \emph{(ii)} are proved.

Let us investigate the regularity of $u_R$. By using an argument analogous to the one used in the proof of \cite[Proposition 2.3]{CS}, we have that {$u_R\in C^{1,\alpha}(B_R\setminus\overline{\Omega})$}. Indeed the result follows from \cite[Chapter 4]{LK} and the fact that the set of points of non-differentiability of $H\nxi H$ consists of just a point, the origin.
Furthermore, by arguing as in the proof \cite[Lemma 2]{Le}, one can show that in fact $H(Du_R)$ does not vanish. 
Indeed, the proof of \cite[Lemma 2]{Le} can be adapted to our case since only the following ingredients are needed: a weak comparison principle, estimates like the ones in (i) and (ii), and the equivalence between norms which is given by \eqref{norms equiv}.
More precisely, a close inspection of the proof of \cite[Lemma 2]{Le} shows that 
$$
H(Du_R)\ge A\frac 1{H_0(x)}\ge B_1 H_0^{1-N}(x),
$$
for $x\in\bhor{R}\setminus\overline{\Omega}$, where $A, B_1$ are constants depending only on the set $\Omega$ and the dimension $N$.
Hence the first condition  in (\ref{Du leq}) holds for $u$, again by passing to the limit.

Notice that, since $Du_R$ does not vanish in $\bhor{R}\setminus\overline{\Omega}$, the differential operator $\Delta_H$ has $C^{0,\alpha}$ coefficients. 
Thanks to \cite[Theorem 6.15]{GT} we obtain $u_R\in C^{2,\alpha}(\overline{\bhor{R}\setminus{\Omega}})$.

To prove the second inequality in \eqref{Du leq}, we rescale $u$ and define
\begin{equation*}
U(y)= \rho^{N-2} u_R(\rho y),
\end{equation*}
where  $\frac{R_1}r<\rho<R$, for some $r>0$, and $y\in \bhor{1}\setminus\overline{\bhor{r}}$. 
We notice that $U$ satisfies $\Delta_H U = 0,$ in $\bhor{1}\setminus\overline{\bhor{r}}$.
From the maximum principle and using (i) and (ii) we have that $U$ is uniformly bounded in $\bhor{1}\setminus\overline{\bhor{r}}$. 

We notice that 
\begin{equation*}
\Delta_H U = \sum_{i,j=1}^n a_{ij} U_{ij},
\end{equation*}
where
\begin{equation*}
a_{ij}(x) = [H_{\xi_i}(DU(x))H_{\xi_j}(DU(x)) + H(DU(x)) H_{\xi_i \xi_j}(DU(x))] .
\end{equation*}
From the first inequality in \eqref{Du leq}, $|D U|$ is bounded away from zero; moreover, the homogeneity properties of $\nxi H$ and $\nxi^2 H$, imply that $a_{ij}$ are bounded as sum of $0$-homogeneous functions. 
Thus $U$ is solution of a uniformly elliptic quasilinear equation and from standard regularity results {\cite{To}, \cite{GT}} we obtain that $|DU|$ is uniformly bounded and $U\in C^{2}$, that is
$$
H(Du_R(\rho y))\le \frac{B_2}{\rho^{N-1}}, \qquad\text{for }y\in\bhor{1}\setminus\overline{\bhor{r}}.
$$
Let $x\in\rn\setminus\overline{\Omega}$; define $\rho=H_0(x)$ and let $R$ be sufficiently large.
Hence
$$
H(Du_R(x))\le B_2H_0^{1-N}(x),
$$
for every $x$ such that $H_0(x)\ge {R_1}/r$.
This concludes the proof of (\ref{Du leq}).

Estimate (\ref{D2u leq}) follows by Schauder's estimates \cite[Theorem 6.2]{GT} applied to $U$. Indeed there exists a constant $B_3$ depending only on the dimension, the regularity and the ellipticity constants (which are independ of $R$ for the homogeneity of the norm $H$) such that
$$
|D^2U(x)|\le B_3 \max u_R\le B_3,
$$
and hence \ref{D2u leq} holds for $u_R$ and then for $u$.

It remains to show that in fact $u$ solves the differential Problem (\ref{pb_capacity}).
As already pointed out, thanks to the strict convexity of $H^2$, the function $u_R$ is the unique minimizing function to the capacity problem in $\bhor{R}\setminus\overline{\Omega}$ (see \cite[Paragraph 8.2.3]{E}).
{Thanks to} the homogeneity and the regularity of $H$ and from the previous estimates we have that
$$
\lim_{R\to\infty}\int_{\bhor{R}\setminus\overline{\Omega}}H^2(Du_R)=\int_{\rn\setminus\overline{\Omega}}H^2(Du),
$$
which implies that $u$ solves the minimum problem (\ref{capminpb}). Moreover, since for every $\phi_R\in C^{\infty}_0(\bhor{R}\setminus\overline{\Omega})$ it holds
$$
\int_{\bhor{R}\setminus\overline{\Omega}}H(Du_R)\nxi H(Du_R)\cdot D\phi_R= 0,
$$
we deduce 
$$
\int_{\rn\setminus\overline{\Omega}}H(Du)\nxi H(Du)\cdot D\phi=0,
$$
for every $\phi\in C^{\infty}_0(\rn\setminus\overline{\Omega})$, that is $u$ is a weak solution to Problem (\ref{pb_capacity}).
Notice that, in fact, {$u$ is a classical solution} since, by using a local argument and \cite[Theorem 6.13]{GT}, we obtain that $u\in C^0({\rn\setminus\Omega})\cap C^{2,\alpha}({\rn\setminus\overline{\Omega}})$.

We can finally conclude that in fact $u\in C^{2,\alpha}(\rn\setminus\Om)$ by applying \cite[Theorem 6.19]{GT} in the set $\{u>\frac12\}\setminus\overline{\Om}$.
\end{proof}

\begin{rem}
	Notice that if $\Omega$ is assumed to be uniformly convex, then estimate (\ref{u leq A Gamma}) holds in the whole $\rn\setminus\overline{\Omega}$ as  proved in the following.
Indeed, since $\Omega$ is uniformly convex,  there exists  $R_*\geq R_1$ such that for any $y \in \pa \Om$ there exists $x^*$ such that the ball $\bhorx{R^*}{x^*}$ contains $\Om$ and is tangent to $\pa \Om$ at $y$ and $\bhor{R}\supseteq \bhorx{R_*}{x^*}$. 
Moreover $ \bhorx{R_*}{x^*}\subseteq \bhorx{3R}{x^*}$.
By considering the function $u_{R^*,3R}(x-x^*)$, the comparison principle yields that $u_R\le u_{R_*,3R}(x-x^*)$ in $\bhor{R}\setminus \overline{\bhorx{R^*}{x^*}}$.
By varying the point $y\in\pa\Omega$ and using the uniform convexity of $\Omega$, we prove that there exists a constant $A_2$, depending only on the set $\Omega$ and the dimension $N$, such that
$$
u_R(x)\le A_2{H_0^{2-N}(x)},
$$
for $x\in \bhor{R}\setminus\overline{\Omega}$ and, by passing to the limit, the 
same estimate holds for $u(x)$, $x\in\rn\setminus\overline{\Omega}$.
\end{rem}

%%%%%%%%%%%%%%%%%%%%%%%%%%%%%%%%%%%%%%%%%%%%%%%%%

\subsection{Elementary functions of a matrix} \label{subsec_elementary}

Given a matrix $A=(a_{ij})\in\RR^{n\times n}$, for any $k=1,\dots,n$ we denote by $S_k(A)$ the sum of all the principal  minors of $A$ of order $k$. 
In particular, $S_1(A)=\tr(A)$, the trace  of $A$, and $S_n(A)= \det (A)$, the determinant of $A$. 
More explicitly
$$
S_k(A)=\frac{1}{k!}\sum
\delta\left(\begin{array}{cc}i_1,\dots,i_k\\j_1,\dots,j_k\end{array}\right)a_{i_1j_1}\cdots a_{i_kj_k}\,,
$$
where $i_r,j_r\in\{1,\dots,k\}$ and the Kronecker symbol $\delta\left(\begin{array}{cc}i_1,\dots,i_k\\j_1,\dots,j_k\end{array}\right)$ has value $+1$ (respectively 
$-1$) when $i_r\neq i_s$ for $r\neq s$ and $(j_1,\cdots,j_k)$ is an even (respectively odd) permutation of $(i_1,\cdots,i_k)$, otherwise it has value $0$.

By setting
\begin{equation}\label{paolo3.5}
S^k_{ij}(A)=\frac{\partial}{\partial a_{ij}}S_k(A)=\frac{1}{(k-1)!}\sum_{(i_r,j_r)\neq(i,j)}
\delta\left(\begin{array}{cc}i_1,\dots,i_{k-1}\\j_1,\dots,j_{k-1}\end{array}\right)a_{i_1j_1}\cdots a_{i_{k-1}j_{k-1}}\,,
\end{equation}
we can write
\begin{equation}\label{paolo4}
S_k(A)=\frac1k\sum S^k_{ij}(A)a_{ij}\,,
\end{equation}
which is nothing more than the Euler's rule for homogeneous functions ($S_k$ being homogeneous of order $k$).
In particular, for $k=n$,  we have
\begin{equation}\label{determinant}
\det(A)=S_n(A)=\frac1n\sum S^n_{ij}(A)a_{ij}\,.
\end{equation}

We also notice that $S^n_{ij}(A)$ is the $(i,j)$-cofactor of $A$. Then \eqref{determinant} also coincides with the so called cofactor (or Laplace) expansion of the determinant; moreover, if $\det(A)\neq 0$ and we denote by $a^{ij}$ the elements of the inverse matrix $A^{-1}$ of $A$, we have
\begin{equation}\label{SNij}
S^n_{ij}(A)=\det(A)a^{ji}\,.
\end{equation}

For further use, we also notice that \eqref{determinant}, \eqref{paolo3.5} and the chain rule for derivatives yield
\begin{equation}\label{derivdet}
\frac{d}{dt}\det(A+tB)_{|t=0}=\sum S^n_{ij}(A)b_{ij}
\end{equation}
for any couple of $n\times n$ matrices $A$ and $B$.

Another case of special interest in our applications is when $k=2$. 
In this case, one has
\begin{equation}\label{eq200}
S_2(A)=\frac12\sum_{i,j}S^2_{ij}(A)a_{ij}\,,
\end{equation}
where
$$
S^2_{ij}(A)=\left\{\begin{array}{ll}
-a_{ji}\quad&\text{if }i\neq j,\\
\\
\sum_{k\neq i}a_{kk}\quad&\text{if }i=j\,.
\end{array}
\right.
$$
% If $A$ is symmetric, $S_k(A)$ coincides with the $k$-th elementary simmetric function of the eigenvalues $\lambda_1,\dots,\lambda_N$ of $A$; namely, $S_k(A)= %S_k(\lambda_1,\dots,\lambda_N)=
% \sum_{1\leq i_1 <\cdots<i_k\leq N } \lambda_{i_1}\cdots \lambda_{i_k}$. 

The next lemma shows a generalization of Newton's inequality to not necessarily symmetric matrices. 
This inequality, together with the characterization of the equality case, is in fact one of the crucial ingredients in the proof of our main result.
\begin{lem}[\cite{CS}, Lemma 3.2]\label{lemma-newton}
Let $B$ and $C$ be symmetric matrices in $\RR^{n\times n}$, and let $B$ be positive semidefinite. Set $A=BC$. Then the following inequality holds:
\begin{equation}\label{newtonIneq}
S_2{(A)}\le\frac{n-1}{2n}\tr(A)^2\, .
\end{equation}
Moreover, if ${\tr} (A)\neq 0$ and equality holds in (\ref{newtonIneq}), then 
\begin{equation*}
A=\frac{{\tr}(A)}{n}\, I \,,
\end{equation*}
and $B$ is, in fact, positive definite.
\end{lem}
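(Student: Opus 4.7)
The plan is to reduce the inequality to a familiar Cauchy--Schwarz statement via the identity $2S_2(A)=\tr(A)^2-\tr(A^2)$. This identity holds for every $n\times n$ matrix $A$ because in terms of eigenvalues (with multiplicity, via the Jordan form) one has $\tr(A)=\sum\lambda_i$, $\tr(A^2)=\sum\lambda_i^2$ and $S_2(A)=\sum_{i<j}\lambda_i\lambda_j$. With this identity, the claimed inequality $S_2(A)\le\frac{n-1}{2n}\tr(A)^2$ is equivalent to $\tr(A)^2\le n\,\tr(A^2)$, so I would aim to prove this reformulation.

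To exploit the symmetry of $B$, diagonalize $B=U\Lambda U^T$ with $U$ orthogonal and $\Lambda=\mathrm{diag}(\lambda_1,\dots,\lambda_n)$, $\lambda_i\ge 0$, and set $C':=U^T C U$, which is again symmetric. A short computation gives $U^T A\,U=\Lambda C'$, so $A$ and $\Lambda C'$ have the same spectrum, hence the same trace and same trace of the square. Then
\begin{equation*}
\tr(A)=\sum_i\lambda_i c'_{ii},\qquad
\tr(A^2)=\tr\bigl((\Lambda C')^2\bigr)=\sum_{i,j}\lambda_i\lambda_j(c'_{ij})^2\ge\sum_i\lambda_i^2(c'_{ii})^2,
\end{equation*}
where the last inequality uses only that $(c'_{ij})^2\ge 0$ and $\lambda_i\lambda_j\ge 0$. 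Combining this with the elementary Cauchy--Schwarz bound
\begin{equation*}
\Bigl(\sum_i\lambda_i c'_{ii}\Bigr)^2\le n\sum_i(\lambda_i c'_{ii})^2=n\sum_i\lambda_i^2(c'_{ii})^2
\end{equation*}
yields $\tr(A)^2\le n\,\tr(A^2)$, which is the desired inequality.

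The equality discussion is the only delicate step, and it is where the hypothesis $\tr(A)\ne 0$ plays its role. Equality in the chain above forces equality in \emph{both} steps. Cauchy--Schwarz equality means all the numbers $\lambda_i c'_{ii}$ coincide with $\mu:=\tr(A)/n$, which is nonzero by assumption. Equality in the off-diagonal estimate requires $\lambda_i\lambda_j(c'_{ij})^2=0$ whenever $i\ne j$. Since $\mu\ne 0$, no $\lambda_i$ can vanish (otherwise $\lambda_i c'_{ii}=0\ne\mu$), so $B$ is in fact positive definite. Then $c'_{ij}=0$ for $i\ne j$ and $c'_{ii}=\mu/\lambda_i$, i.e. $C'=\mu\Lambda^{-1}$, so that $\Lambda C'=\mu\,I$ and therefore $A=U(\mu I)U^T=\mu I=\frac{\tr(A)}{n}I$, completing the proof.
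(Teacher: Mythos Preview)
Your argument is correct. The identity $2S_2(A)=\tr(A)^2-\tr(A^2)$ is valid for every square matrix (indeed it can be checked directly from the entries: $S_2(A)=\sum_{i<j}(a_{ii}a_{jj}-a_{ij}a_{ji})$, while $\tr(A)^2-\tr(A^2)=2\sum_{i<j}(a_{ii}a_{jj}-a_{ij}a_{ji})$, so there is no need to pass through the Jordan form). The reduction to $\tr(A)^2\le n\,\tr(A^2)$, the diagonalization of $B$, the computation $\tr((\Lambda C')^2)=\sum_{i,j}\lambda_i\lambda_j(c'_{ij})^2$ using the symmetry of $C'$, and the two-step Cauchy--Schwarz/off-diagonal estimate are all sound, as is the equality analysis.

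As for comparison: the present paper does not give its own proof of this lemma; it is quoted from \cite{CS} (Cianchi--Salani, Lemma~3.2) and used as a black box. So there is no argument in the paper to compare with. Your proof is a clean, self-contained verification that could serve perfectly well in place of the citation.
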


Of particular interest in our approach is the quantity $S_2(W)$, where $W=\nxi^2 V(Dv)D^2v\in\RR^{N\times N}$, with $V(\xi)=\frac 12H^2(\xi)$, $H$ a $C^2(\rn\setminus\{0\})$ norm, and $v\in W^{2,2}(\Om)\cap C^{1}(\overline{\Om})$ in a bounded open set $\Om$ with $v=1$ on the $\pa \Omega$.
Notice that, in this case it holds
\begin{equation} \label{S2ijW}
\sdue{W}=
\begin{cases}
-(\nxi^2V(Dv)D^2v)_{ji} &\qquad\text{if }i\neq j,\\
-(\nxi^2V(Dv)D^2v)_{ji}+\Delta_Hv &\qquad\text{if }i=j.
\end{cases}
\end{equation}
Moreover in this setting $\sdue{W}$ is divergence free, in the following (weak) sense (see [\cite{CS}, (4.14)])
\begin{equation}\label{divS2=0}
\frac{\pa}{\pa x_j}\sdue{W}=0.
\end{equation}

%%%%%%%%%%%%%%%%%%%%%%%%%%%%%%%%%%%%%%%%%%%%%%%%%

\subsection{Ingredients of convex geometry} \label{subsec_convex_geometry}

We briefly present some notions and results of convex analysis. For their proof and additional details we refer to \cite{Sc}.

We indicate by $\KK$ the set of convex subsets of $\rn$.
For $K,L$ in $\KK$ we define their Minkowski sum as the vectorial sum $K+L=\{x\in\rn\ :\ x=k+l, k\in K, l\in L \}$.
Many results have been proved regarding the volume of Minkowski sum of convex sets. 
In particular the volume of a convex combination of $m$ convex sets $K_1,...,K_m$ with weight $\la_1,...,\la_m$ is a polynomial of degree $m$ in the coefficients $\la_i$, as shown in the following proposition.
\begin{prop}[\cite{Sc} Theorem 5.1.7]
Let $K_1,...,K_m$ be convex sets in $\KK$ and $\la_1,...,\la_m$ non negative numbers.
There exists a non-negative symmetric function $V:(\KK)^N\to \RR$ such that
\begin{equation}
|\la_1K_1+...+\la_mK_m|=\sum_{i_1,...,i_N=1}^m \la_{i_1}\cdots\la_{i_N}V(K_{i_1},...,K_{i_N}).
\end{equation}
\end{prop}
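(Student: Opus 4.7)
The plan is to establish the polynomial expansion first for polytopes, where a direct inductive computation on the dimension is available, and then extend to arbitrary convex bodies by a Hausdorff-approximation argument. The mixed volume $V(K_{i_1},\ldots,K_{i_N})$ is then read off as the (symmetrized) coefficient of $\lambda_{i_1}\cdots\lambda_{i_N}$ in the resulting homogeneous polynomial.

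First I would treat the polytope case by induction on $N$. Suppose $K_1,\ldots,K_m$ are polytopes in $\rn$; by adjoining a common family of outer unit normals $\{u_1,\ldots,u_s\}$ (facets lying in irrelevant directions simply contribute zero area to a particular $K_j$), we can arrange that every Minkowski combination $K(\la)=\la_1K_1+\cdots+\la_mK_m$ has facets only in these directions. Since the support function is linear under Minkowski sums,
\begin{equation*}
h(K(\la),u_j)=\sum_{i=1}^{m}\la_i\, h(K_i,u_j),
\end{equation*}
and the volume admits the cone decomposition (after translating so that $0\in K(\la)$)
\begin{equation*}
|K(\la)|=\frac{1}{N}\sum_{j=1}^{s}h(K(\la),u_j)\,\haus(F_j(\la)),
\end{equation*}
where $F_j(\la)$ is the facet of $K(\la)$ with outer normal $u_j$. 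A standard argument shows $F_j(\la)=\la_1 F_j(K_1)+\cdots+\la_m F_j(K_m)$ (each summand regarded as a polytope in the hyperplane $u_j^\perp\cong\RR^{N-1}$). By the inductive hypothesis applied in dimension $N-1$, each $\haus(F_j(\la))$ is a homogeneous polynomial of degree $N-1$ in $(\la_1,\ldots,\la_m)$. Multiplying by the linear term $h(K(\la),u_j)$ and summing in $j$ yields the desired homogeneous polynomial of degree $N$, so that we may write
\begin{equation*}
|K(\la)|=\sum_{i_1,\ldots,i_N=1}^{m}\la_{i_1}\cdots\la_{i_N}\,a(K_{i_1},\ldots,K_{i_N})
\end{equation*}
for some coefficients $a$; symmetrizing over permutations of $(i_1,\ldots,i_N)$ produces a symmetric function $V$ giving the same expansion.

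Next I would extend to general convex bodies. Each $K_i$ can be approximated in the Hausdorff metric by a sequence $(P_i^{(k)})_k$ of polytopes, and Minkowski addition and scalar multiplication by non-negative reals are continuous for the Hausdorff metric. Since the Lebesgue volume is continuous on $\KK$, the polynomial identity for polytopes passes to the limit. Because the identity holds for \emph{all} choices of $\la\ge0$, the coefficients of the polynomial must converge too, so $V(K_{i_1},\ldots,K_{i_N})$ is well defined as a limit. Non-negativity follows from the non-negativity of volumes of Minkowski sums with non-negative coefficients together with a finite inclusion-exclusion/M\"obius inversion that expresses each mixed volume as a signed combination of such volumes, controlled by Brunn-Minkowski positivity; alternatively one invokes the geometric interpretation of mixed volumes as averages of surface area measures.

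The main obstacle is the clean handling of the inductive step in the polytope case, specifically the identification $F_j(\la)=\sum_i\la_i F_j(K_i)$ of facets: one must argue that the maximum of the linear functional $\langle\cdot,u_j\rangle$ on $\sum_i\la_i K_i$ is attained exactly at the Minkowski sum of the individual maximizing faces (which is standard but requires a short argument about sums of supporting hyperplanes), and that this identification is compatible with the chosen common normal set. Once this is in place, polynomiality in dimension $N$ follows mechanically from polynomiality of $(N-1)$-dimensional volumes by the Euler-type cone formula above, and the extension to $\KK$ is a routine density and continuity argument.
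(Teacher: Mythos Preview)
The paper does not prove this proposition: it is quoted verbatim from Schneider \cite{Sc} (Theorem~5.1.7) and used as a black box in \S\ref{subsec_convex_geometry}, so there is no in-paper argument to compare against. Your outline is precisely the classical proof found in \cite{Sc}---induction on the dimension for polytopes via the cone-volume decomposition and the support-set identity $F(K+L,u)=F(K,u)+F(L,u)$, followed by Hausdorff approximation for general bodies---and it is correct as far as the polynomial expansion and symmetry are concerned.

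One loose end deserves tightening. Your justification of non-negativity is weaker than necessary: the inclusion--exclusion formula expresses $V(K_{i_1},\ldots,K_{i_N})$ as an \emph{alternating} sum of ordinary volumes, and Brunn--Minkowski does not by itself convert such a signed combination into something non-negative. In fact non-negativity already falls out of your induction. After translating so that $0\in K_i$ for every $i$ (harmless, since the polynomial identity for $|K(\la)|$ is translation invariant and hence so are its coefficients), all support numbers $h(K_i,u_j)$ are $\ge 0$; the coefficient you extract before symmetrization is
\[
\frac{1}{N}\sum_{j} h(K_{i_1},u_j)\,v\big(F_j(K_{i_2}),\ldots,F_j(K_{i_N})\big),
\]
where the $(N-1)$-dimensional mixed volumes $v$ are $\ge 0$ by the inductive hypothesis. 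This is term-by-term non-negative, so $V\ge 0$ without invoking area measures or Brunn--Minkowski.
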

The coefficients $V(K_{i_1},...,K_{i_N})$ (symmetric in their arguments) are named \emph{mixed volumes} of $K_1,...,K_m$.

%For a regular strictly convex set $K$ we indicate by $\tau(K,\theta): \Sbb^{N-1} \to \pa \Om$ the function whose image at $\theta\in\Sbb^{N-1}$ is the boundary point whose outer unit normal vector is $\theta$ and we have $\tau(\Om,\theta)=\nabla_uh(\Omega,\theta)$.
%This function is well defined and it is a homeomorphism.
For a regular (say at least $C^1$) convex set $K$, the {\em Gauss map} of  $K$, which associates to every point $x\in\partial K$ the outer unit normal vector of $\partial K$ at $x$, is denoted by $\nu(K,x):\pa K \to \Sbb^{N-1}$. If $K$ is strictly convex, $\nu$ is invertible and its inverse map is denoted by 
$\tau(K,\cdot)$ and in fact it coincides with the restriction of $D_\theta h(K,\cdot)$ at $\Sbb^{N-1}$ (see \cite{Sc}), that is
\begin{equation}\label{tauh}
\tau(K,\cdot)=\nu^{-1}(K,\cdot)=D_\theta h(K,\cdot):\Sbb^{N-1}\to\pa K\,,
\end{equation}
where we recall that $h(K,\theta)$ indicates the support function of the set $K$ in the direction $\theta$.
For an arbitrary convex domain $K$, $\tau(K,w)$ indicates the set of all boundary points of $K$ at which there exists a normal vector of $K$ belonging to the set $w\subset S^{N-1}$.

The Hausdorff measure of $\tau(K,\cdot)$ is called Area measure: 
$$
\textsf{S}_{N-1}(K,\theta)=\haus(\tau(K,\theta)).
$$
As for the volume, considering the area measure for a Minkowski combination of convex sets leads to the notion of mixed area measures, as shown in the following proposition.
\begin{prop}[\cite{Sc} Theorem 5.1.6]
Let $K_1,...,K_m$ be convex sets in $\KK$ and $\la_1,...,\la_m$ non negative numbers.
There exists a symmetric map $\textsf{S}$ from $(\KK)^N$ to the space of finite Borel measures on the sphere $\Sbb^{N-1}$ such that
\begin{equation}\label{SNmeno1}
{\textsf{S}_{N-1}}(\la_1K_1+...+\la_mK_m, \cdot)=\sum_{i_1,...,i_{N-1}=1}^m \la_{i_1}\cdots\la_{i_{N-1}}\textsf{S}(K_{i_1},...,K_{i_{N-1}}, \cdot).
\end{equation}
\end{prop}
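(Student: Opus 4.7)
The plan is to first establish the expansion on the dense subclass of polytopes by direct geometric computation, and then extend to arbitrary convex bodies by weak-continuity of the surface area measure in the Hausdorff metric. An alternative, somewhat slicker route is to invoke the already-stated polynomial expansion of volume and Riesz representation, and I will sketch both.

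\textbf{Step 1: polytopal case.} Suppose first that $K_1, \dots, K_m \in \KK$ are polytopes. Then $P(\la) := \la_1 K_1 + \dots + \la_m K_m$ is a polytope for every $\la \geq 0$, and $\textsf{S}_{N-1}(P(\la), \cdot)$ is a finite sum of point masses at the outer normals $u$ of its facets, with weights equal to the $(N-1)$-dimensional volumes of those facets. A standard face lemma gives $F(P(\la), u) = \la_1 F(K_1,u) + \dots + \la_m F(K_m, u)$ (possibly lower-dimensional summands), so the facet area in direction $u$ is the $(N-1)$-volume of a Minkowski combination inside the hyperplane $u^\perp$. By the polynomial expansion of volume in dimension $N-1$, this is a homogeneous polynomial of degree $N-1$ in $\la$, with symmetric non-negative coefficients $s_{i_1 \dots i_{N-1}}(u)$. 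Collecting over the finitely many facet-normals of $P(\la)$ yields
\begin{equation*}
\textsf{S}_{N-1}(P(\la), \cdot) = \sum_{i_1, \dots, i_{N-1} = 1}^m \la_{i_1}\cdots\la_{i_{N-1}}\, \textsf{S}(K_{i_1}, \dots, K_{i_{N-1}}; \cdot),
\end{equation*}
where each $\textsf{S}(K_{i_1}, \dots, K_{i_{N-1}}; \cdot)$ is a finite, non-negative, symmetric, discretely-supported measure on $\Sbb^{N-1}$.

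\textbf{Step 2: extension to general convex bodies.} Approximate each $K_i$ in the Hausdorff metric by a sequence of polytopes $K_i^{(n)}$. A key ingredient is that $K \mapsto \textsf{S}_{N-1}(K, \cdot)$ is weakly continuous with respect to Hausdorff convergence; this follows from the representation $\int_{\Sbb^{N-1}} f \, d\textsf{S}_{N-1}(K, \cdot) = \int_{\pa K} f(\nu) \, d\haus$ for $f \in C(\Sbb^{N-1})$, together with a standard approximation argument. Fixing a Vandermonde-type collection of parameter vectors $\la$, the polytope identity of Step 1 inverts to express the candidate coefficient measures $\textsf{S}(K_{i_1}^{(n)}, \dots, K_{i_{N-1}}^{(n)}; \cdot)$ as finite rational linear combinations of the weakly convergent measures $\textsf{S}_{N-1}(P^{(n)}(\la), \cdot)$. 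Passing to the weak limit defines the mixed area measure $\textsf{S}(K_{i_1}, \dots, K_{i_{N-1}}; \cdot)$ for general convex bodies; non-negativity survives the limit because any fixed $\la$ with positive components makes the combination coincide with a positive surface area measure, and the coefficients inherit symmetry.

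\textbf{Alternative route via Riesz representation.} Using the already-established polynomial expansion of volume and the identity $|K| = \tfrac{1}{N}\int_{\Sbb^{N-1}} h(K, u)\, d\textsf{S}_{N-1}(K, u)$, one checks that for fixed $K_1, \dots, K_{N-1}$ the map $K_0 \mapsto N \cdot V(K_0, K_1, \dots, K_{N-1})$ is Minkowski-additive and positively homogeneous. Since every $f \in C(\Sbb^{N-1})$ is the difference of two support functions, this map extends to a positive linear functional on $C(\Sbb^{N-1})$, and Riesz representation produces the desired measure $\textsf{S}(K_1, \dots, K_{N-1}; \cdot)$; the polynomial identity \eqref{SNmeno1} then follows from the variational formula by plugging $K_0 = \la_1 K_1 + \dots + \la_m K_m$ and comparing with the expansion of $V(K_0, K_0, \dots, K_0)$.

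\textbf{Main obstacle.} The central technical point is the weak continuity of $\textsf{S}_{N-1}$ (equivalently, the well-posedness of the Riesz step in the alternative route) together with the uniqueness of the coefficients in the expansion. Once these are in place, symmetry and non-negativity are automatic from the polytope approximation, and finiteness follows because $\textsf{S}_{N-1}(K, \Sbb^{N-1}) = \haus(\pa K)$ is finite for any convex body $K$.
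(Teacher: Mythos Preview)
The paper does not give its own proof of this proposition: it is stated as a preliminary result and attributed directly to Schneider \cite[Theorem 5.1.6]{Sc}. There is therefore no in-paper argument to compare against.

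Your sketch is correct in outline and in fact follows the standard textbook development (essentially the one in \cite{Sc}): one first treats polytopes via the face lemma $F(K+L,u)=F(K,u)+F(L,u)$ and the polynomial expansion of $(N-1)$-volume in $u^\perp$, then passes to the limit using weak continuity of $K\mapsto\textsf{S}_{N-1}(K,\cdot)$. The alternative Riesz-representation route you indicate is likewise standard and is precisely how one usually defines the mixed area measures once mixed volumes are in hand. A couple of points worth tightening if you want to turn the sketch into a full proof: in Step~1 the set of relevant normals may vary with $\la$, so one should work with the finite union of the normal directions of all the $K_i$ and observe that contributions from directions that are not facet normals of $P(\la)$ vanish; and in Step~2 the ``Vandermonde inversion'' is cleanest if you use a polarization formula for homogeneous polynomials of degree $N-1$, which gives explicit positive-coefficient expressions and makes the limiting non-negativity transparent.
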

The coefficients $\textsf{S}(K_{i_1},...,K_{i_{N-1}}, \cdot)$ (symmetric in their arguments) are called {\em mixed area measures} of $K_1,...,K_m$.

Mixed volumes and mixed area measures are related by the following integral formula (see \cite[Theorem 5.1.7]{Sc}):
\begin{equation}\label{integralMixedVolume}
V(K_1,K_2,...,K_N)=\frac 1N\int_{\Sbb^{N-1}}h(K_1,\theta)\textsf{S}(K_2,...,K_N,d\theta).
\end{equation}

One of the most important results on mixed volume is a system of quadratic inequalities called Aleksandrov-Fenchel inequalities, satisfied by general mixed volumes. 
A special version is the following: (see \cite[Section 7.3]{Sc})
%\begin{thm}[\cite{Sc}, Section 7.3]
for $K,L\in\KK$ it holds that
\begin{equation}\label{alekIneq}
V(L,K,...,K)^2 \ge |K|\; V(L,L,K,...,K).
\end{equation}
%\end{thm}

In the proof of our main result, we will use \eqref{alekIneq} when an anisotropic ball and a general convex set are considered (i.e. $L=\bho, K=\Om$).
\begin{prop}\label{mink-ineq-prop}
Let $H$ be a norm of $\rn$ of class $C^2(\rn \setminus \{O\})$ and let $\Om$ be a regular bounded convex domain in $\rn$; it holds:
\begin{equation}\label{mink-ineq}
P_H^2(\Om) \ge N|\Om|\,\int_{\pa\Om}\frac{\Hcal(x)}{N-1}H(\nu(x))\;d\haus(x).
\end{equation}
\end{prop}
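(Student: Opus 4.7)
The plan is to derive \eqref{mink-ineq} from the Aleksandrov-Fenchel inequality \eqref{alekIneq} applied with $L = \bho$ and $K = \Om$, which reads
\begin{equation*}
V(\bho, \Om, \dots, \Om)^2 \ge |\Om|\, V(\bho, \bho, \Om, \dots, \Om).
\end{equation*}
The task then reduces to recognizing each of these two mixed volumes as the anisotropic geometric quantity of $\Om$ appearing on the corresponding side of \eqref{mink-ineq}.

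For the ``linear'' mixed volume, I would invoke the integral representation \eqref{integralMixedVolume} with $K_1 = \bho$ and $K_i = \Om$ for $i \ge 2$. Using the duality $h(\bho, \cdot) = H$ from \eqref{HhB} together with the fact that under the Gauss map the area measure $\textsf{S}(\Om, \dots, \Om, \cdot)$ pulls back to the ordinary surface measure on $\pa\Om$, one obtains
\begin{equation*}
V(\bho, \Om, \dots, \Om) = \frac{1}{N}\int_{\pa\Om} H(\nu)\, d\haus(x) = \frac{P_H(\Om)}{N}.
\end{equation*}

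For the second mixed volume I would compute the first variation of $P_H$ along the Minkowski flow $\Om_t := \Om + t\bho$. Applying the previous identification to $\Om_t$ and expanding in $t$ by multilinearity of $V$ yields
\begin{equation*}
P_H(\Om+t\bho) = P_H(\Om) + t\,N(N-1)\,V(\bho, \bho, \Om, \dots, \Om) + O(t^2).
\end{equation*}
On the other hand, since $h(\Om + t\bho, \theta) = h(\Om, \theta) + tH(\theta)$, this Minkowski shift produces a normal velocity equal to $H(\nu)$ at every boundary point of $\pa\Om$; combined with the standard variational characterization of $\Hcal$ as the first-variation integrand of $P_H$ under normal perturbations (consistent with the definition \eqref{MHOm}), one obtains $\tfrac{d}{dt}\big|_{t=0}P_H(\Om_t) = \int_{\pa\Om}\Hcal\,H(\nu)\, d\haus$. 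Matching the two expressions yields $V(\bho, \bho, \Om, \dots, \Om) = \frac{1}{N(N-1)}\int_{\pa\Om}\Hcal(x)\,H(\nu(x))\, d\haus(x)$, and substituting both identifications into the Aleksandrov-Fenchel inequality above gives \eqref{mink-ineq} after routine rearrangement.

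The main obstacle, I expect, lies in the first-variation computation: one has to verify carefully that the definition \eqref{MHOm} of $\Hcal$ coincides with the integrand appearing in the first variation of the anisotropic perimeter under normal variations, and that the Minkowski perturbation by $t\bho$ indeed corresponds to a normal velocity $h(\bho, \nu) = H(\nu)$. Once this point is settled, the proof is a direct application of the integral formula \eqref{integralMixedVolume} for mixed volumes and the Aleksandrov-Fenchel inequality \eqref{alekIneq}, both of which are already available from convex geometry.
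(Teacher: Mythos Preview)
Your overall strategy coincides with the paper's: both apply the Aleksandrov--Fenchel inequality \eqref{alekIneq} with $L=\bho$, $K=\Om$, and both identify $V(\bho,\Om,\dots,\Om)=\tfrac1N P_H(\Om)$ via \eqref{integralMixedVolume} and \eqref{HhB}. The genuine difference lies in the identification of $V(\bho,\bho,\Om,\dots,\Om)$. The paper computes this mixed volume directly within convex geometry: it expresses the mixed area measure $\textsf{S}(\bho,\Om,\dots,\Om,\theta)$ as a mixed discriminant of the Hessians of the support functions (formula \eqref{SKtheta}), then uses \eqref{derivdet}, \eqref{SNij} and the relation $h^{ij}(\Om,\theta)=\nu_i^j(\Om,\tau(\Om,\theta))$ to recognise the anisotropic mean curvature \eqref{MHOm} in the integrand. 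You instead take the variational route, differentiating $P_H(\Om+t\bho)$ at $t=0$ in two ways---once by multilinearity of mixed volumes, once by the first-variation formula for $P_H$ with normal speed $H(\nu)$---and equating the linear coefficients. Your approach is lighter and more conceptual, trading the mixed-discriminant machinery for the (well-known but not proved in the paper) fact that $\Hcal$ as defined in \eqref{MHOm} is exactly the first-variation density of $P_H$; the paper's computation is self-contained but heavier. One minor point you omit and the paper includes: the argument is carried out first for strictly convex $\Om$ (so the Gauss map is invertible) and extended to general convex $\Om$ by approximation.
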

%%%
\begin{rem}
Notice that (\ref{mink-ineq}) can be seen as the anisotropic version of the Minkowski inequality (\cite[Theorem 7.2.1]{Sc})
$$
\Big(\haus(\pa\Om)\Big)^2\ge N|\Om|\int_{\pa\Om}H_1\;d\haus(x),
$$
where $H_1$ denotes the standard mean curvature of $\pa\Om$.
\end{rem}
%%%
\begin{proof}[Proof of Proposition \ref{mink-ineq-prop}]
We first prove the theorem for strictly convex domains, then  the general statement follows by approximation.

Let $\Omega$ be a strictly convex domain; we compute the mixed volumes in the Aleksandrov-Fenchel inequality \eqref{alekIneq} for $L=\bho$ and $K=\Om$.
We first show that:
\begin{equation}\label{V(BKK)}
V(\bho,\Om,...,\Om)= \frac 1N P_H(\Om).
\end{equation}
Indeed, thanks to the integral formula \eqref{integralMixedVolume}, the fact that $\textsf{S}(\Om,...,\Om,\theta)=\haus(\tau(\Om,\theta))$ and \eqref{HhB}, we can compute
\begin{eqnarray*}
V(\bho,\Om,...,\Om)&=&\frac 1N\int_{\Sbb^{N-1}}h(\bho,\theta)\;d\haus(\tau(\Om,\theta))\\
	&=&\frac 1N\int_{\pa\Om}h(\bho,\nu(\Om,x))\;d\haus(x)=\frac 1N\int_{\pa\Om}H(\nu(\Om,x))\;d\haus(x)\\
	&=&\frac 1NP_H(\Om).
\end{eqnarray*}
Notice that the function $\tau(\Om,\cdot)$ is well defined since $\Om$ is strictly convex by assumption.

Let us now show that
\begin{equation}\label{paolo1}
V(\bho,\bho,\Om,...,\Om)=\frac 1N \int_{\pa\Om}\frac{\Hcal(x)}{N-1} H(\nu(x))\;d\haus(x).
\end{equation}
Indeed, by \eqref{integralMixedVolume} and (\ref{HhB}) we have
\begin{equation}\label{paolo2}
V(\bho,\bho,\Om,...,\Om)=\frac 1N \int_{S^{N-1}}H(\theta)\,\textsf{S}(\bho,\Om,...,\Om,\theta)\,d\theta\,.
\end{equation}
Let $\theta\in\Sbb^{N-1}$ be fixed and choose an orthonormal basis $(e_1,\dots,e_N)$ or $\rn$ with $e_N=\theta$. Then it holds (see \cite[(2.68)]{Sc}) that
\begin{equation}\label{SKtheta}
\textsf{S}(K_1,K_2,\dots,K_{N-1},\theta)=D\big((h_{ij}(K_1,\theta))_{i,j=1}^{N-1},\dots,(h_{ij}(K_{N-1},\theta))_{i,j=1}^{N-1}\big)\,,
\end{equation}
where $D(A_1,\dots,A_m)$ denotes the \emph{mixed discriminant} of the $(N-1)\times(N-1)$ matrices $A_1,\dots,A_m$; the mixed discriminants are  symmetric in their arguments and then uniquely determined by the formula
\begin{equation}\label{defmixedD}
\det(\lambda_1A_1+\dots+\lambda_mA_m)=\sum_{i_1,\dots,i_k=1}^m\lambda_{i_1}\cdots\lambda_{i_k}D(A_{i_1},\dots,A_{i_k})\,.
\end{equation}

%\nota{Da qui}
Then, by \eqref{SKtheta}, we have
\begin{equation}\label{p1}
\textsf{S}(\bho,\Om,...,\Om,\theta)=D\big((H_{ij}(\theta))_{i,j=1}^{N-1},(h_{ij}(\Omega,\theta))_{i,j=1}^{N-1},\dots,(h_{ij}(\Omega,\theta))_{i,j=1}^{N-1}\big)
\end{equation}
and by \eqref{defmixedD}
\begin{equation*}\label{p2}
D\big((H_{ij}(\theta))_{i,j=1}^{N-1},(h_{ij}(\Omega,\theta))_{i,j=1}^{N-1},\dots,(h_{ij}(\Omega,\theta))_{i,j=1}^{N-1}\big)
=\frac{1}{(N-1)}\,\frac{d}{dt}\det(A_1+tA_2)_{|t=0}\,,
\end{equation*}
where $A_1=(h_{ij}(\Omega,\theta))_{i,j=1}^{N-1}$ and
$A_2=(H_{ij}(\theta))_{i,j=1}^{N-1}$ (and $m=2$, $\lambda_1=1$, $\lambda_2=t$).
%\\
%&=\dfrac{1}{N-1}\sum H_{ij}(\theta) \textsf{S}_{N-1}^{ij}((h_{rs}(\Omega,\theta))_{r,s=1}^{N-1})\,.
%\end{array}
%\end{equation}
%Consider the first order Taylor expansion of the previous equation calculated for $m=2, K_1=\Omega, K_2=\bho, \la_1=1,\la_2=t$.
%Thanks to \eqref{SKtheta}, \eqref{paolo3.5} and \eqref{paolo4}, we have that

From \eqref{derivdet}, we get
\begin{equation*}\label{paolo3}
\frac{d}{dt}\det(A_1+tA_2)_{|t=0}=\sum H_{ij}(\theta) \textsf{S}_{N-1}^{ij}((h_{rs}(\Omega,\theta))_{r,s=1}^{N-1})
\end{equation*}
and  \eqref{SNij} tells
\begin{equation}\label{questasi}
\textsf{S}_{N-1}^{ij}((h_{rs}(\Omega,\theta))_{r,s=1}^{N-1})=h^{ij}\,\det((h_{rs}(\Omega,\theta))_{r,s=1}^{N-1})\,,
\end{equation}
where $(h^{ij})_{i,j=1}^{N-1}$ denotes the inverse matrix of $(h_{ij}(\Omega,\theta))_{i,j=1}^{N-1}$.

The chain of equalities from \eqref{p1} to \eqref{questasi} yields
$$
\textsf{S}(\bho,\Om,...,\Om,\theta)=\frac1{N-1}\sum H_{ij}(\theta)h^{ij}\det((h_{rs}(\Omega,\theta))_{r,s=1}^{N-1})\,,
$$
and, since \eqref{tauh} gives $h^{ij}(\Omega,\theta)=\nu_i^j(\Omega,\tau(\Omega,\theta))$, we obtain
$$
\textsf{S}(\bho,\Om,...,\Om,\theta)=\frac1{N-1}\sum H_{ij}(\theta)\nu_i^j\det((h_{rs}(\Omega,\theta))_{r,s=1}^{N-1})\,.
$$
Finally, from \eqref{MHOm} we get
\begin{equation}\label{S=M}
\textsf{S}(\bho,\Om,...,\Om;\theta)=\frac{\Hcal(\tau(\Omega,\theta))}{N-1}\det((h_{rs}(\Omega,\theta))_{r,s=1}^{N-1}) \,.
\end{equation}
Inserting the latter into \eqref{paolo2}, using the change of variable $\theta=\nu(\Omega,x)$ (equivalently $x=\tau(\Omega,\theta)$) and taking into account \eqref{tauh}, we obtain
\eqref{paolo1}.
Coupling \eqref{paolo1}, \eqref{V(BKK)} and \eqref{alekIneq}, we get  \eqref{mink-ineq}.
\end{proof}

%%%%%%%%%%%%%%%%%%%%%%%%%%%%%%%%%%%%%%%%%%%%%%%%%
%%%%%%%%%%%%%%%%%%%%%%%%%%%%%%%%%%%%%%%%%%%%%%%%%

\section{Proof of Theorem \ref{theorem Omega ball}} \label{sec_mainresult}
%---------- DA RIVEDERE ------------
%In this section we present the proof of the main result Theorem \ref{theorem Omega ball} which extends the exterior Serrin type result \cite{Re} to the anisotropic setting. 
%More precisely let us consider the following overdetermined problem:
%\begin{equation}\label{overdetPb}
%\begin{cases}
%\Delta_H u=0 \qquad&\text{ in }\rn\setminus\overline{\Om}\\
%u=1 \qquad&\text{ on }\pa\Om\\
%H(Du)=C \qquad&\text{ on }\pa\Om\\
%u\to 0 \qquad&\text{ if }|x|\to\infty,
%\end{cases}
%\end{equation}
%where $H$ is a norm of $\rn$ and $C=\frac{N-2}N\frac{P_H(\Omega)}{|\Omega|}$.
%------------------------------

%\begin{proof}
We consider the auxiliary function $v(x)=u(x)^{\frac2{N-2}}$ which solves the following problem
\begin{equation}\label{Pbv}
\begin{cases}
\Delta_H v=\dfrac Nv V(Dv) \qquad&\text{ in }\rn\setminus\overline{\Om},\\
v=1 \qquad&\text{ on }\pa\Om,\\
H(Dv)= \frac2{N-2}C \qquad&\text{ on }\pa\Om,\\
v\to +\infty \qquad&\text{ if }|x|\to\infty,
\end{cases}
\end{equation}
where $V(\xi)=\frac 12H^2(\xi)$ and $C$ is the same constant as in \eqref{3bis}.

We define the matrix $W(x)=\W{x}$ whose elements  are denoted by $w_{ij}$.
In order to simplify the presentation, arguments are omitted and hence $H, V$ denote $H(Dv(x)), V(Dv(x))$, respectively.

Since $\sdue{W}$ is divergence free (see (\ref{divS2=0})), we have that
\begin{equation*}
\diver(v^{1-N}\sdue{W}V_{\xi_i}) = (1-N)v^{-N}\sdue{W}V_{\xi_i}v_j+v^{1-N}\sdue{W}V_{\xi_i\xi_k}v_{kj} \,,
\end{equation*}
and from \eqref{eq200} we obtain that
\begin{equation} \label{divvsdue}
\diver(v^{1-N}\sdue{W}V_{\xi_i}) = (1-N)v^{-N}\sdue{W}V_{\xi_i}v_j+2v^{1-N}S_2(W) \,.
\end{equation}
From \eqref{S2ijW} we can write
$$
\sdue{W}V_{\xi_i}v_j = -w_{ji}V_{\xi_i}v_j+\Delta_Hv\,V_{\xi_i}v_i
$$
so that (\ref{Hlaplacian-curvatura}) yields
\begin{equation} \label{sdueVivj}
\sdue{W}V_{\xi_i}v_j =-V_{\xi_j\xi_k}v_{ki}V_{\xi_i}v_j + 2V(H_{\xi_i}H_{\xi_j}v_{ij}+\Hcal(\Om)H) \,.
\end{equation}
Moreover, recalling that $V=\frac 12H^2$ and the homogeneity properties (\ref{nablaH.xi}), (\ref{D2Hxi}) of $H$, it holds 
\begin{eqnarray*}
V_{\xi_j\xi_k}v_{ki}V_{\xi_i}v_j= HH_{\xi_k}H_{\xi_i}v_{ki}\,H_{\xi_j}v_j+H^2H_{\xi_k\xi_j}H_{\xi_i}v_{ki}v_j=H^2H_{\xi_i}H_{\xi_k}v_{ki}.
\end{eqnarray*}
By coupling this latter with relation (\ref{sdueVivj}) we get
\begin{equation}\label{sdueVivj2}
\sdue{W}V_{\xi_i}v_j = H^3\Hcal(\Om) \,.
\end{equation} 

By \eqref{sdueVivj} and the homogeneity properties (\ref{nablaH.xi}), (\ref{D2Hxi}) it holds that 
\begin{eqnarray*}
v^{-N}\sdue{W}V_{\xi_i}v_j=2v^{-N}V\Delta_Hv-v^{-N}V_{\xi_i}V_{\xi_k}v_{ki}.
\end{eqnarray*}
Moreover, using the fact that $v^{-N}V_{\xi_i}V_{\xi_k}v_{ki}=2NV^2v^{-(N+1)}+\diver(v^{-N}V\nxi V)$ and the first equation (\ref{Pbv}), we have
\begin{equation}\label{vnsdue}
v^{-N}\sdue{W}V_{\xi_i}v_j = Nv^{-(N+1)}V^2-\diver(v^{-N}V\nxi V) \,,
\end{equation}
for every $x\in\rn\setminus\overline{\Om}$.

Let $R$ be large so that $\Om$ is contained in the Euclidean ball $B^R$ with radius $R$ and centered at the origin.
We are going to compute 
$$
I=\int_{B^R\setminus\overline{\Om}}v^{-N}\sdue{W}V_{\xi_i}v_j \;dx
$$ 
by using expressions (\ref{divvsdue}) from one hand and (\ref{vnsdue}) on the other hand.

From (\ref{divvsdue}), the Divergence Theorem and the fact that $\nu=-Dv/|Dv|$, we compute
\begin{eqnarray*}
I &=& 
	      -\frac 1{N-1}\int_{B^R\setminus\overline{\Om}} \diver(v^{1-N}\sdue{W}V_{\xi_i})\;dx+\frac2{N-1}\int_{B^R\setminus\overline{\Om}}v^{1-N}S_2(W)\;dx\\
	&=& \frac 1{N-1} \int_{\pa\Om}v^{1-N}\sdue{W}V_{\xi_i}\frac{v_j}{H(Dv)}H(\nu)\;d\haus(x)-\frac1R\frac1{N-1}\int_{\{|x|=R\}}v^{1-N}\sdue{W}V_{\xi_i}x_j\;\\
	&& \hspace{7cm}{ + \frac2{N-1}\int_{B^R\setminus\overline{\Om}}v^{1-N}S_2({W})\;dx.}
\end{eqnarray*}
Notice that the first term in the latter expression can be rewritten by using the boundary conditions in (\ref{Pbv}) and equation (\ref{sdueVivj2}) as
$$
\frac 1{N-1} \int_{\pa\Om}v^{1-N}\sdue{W}V_{\xi_i}\frac{v_j}{H(Dv)}H(\nu)\;d\haus(x)=\frac{4C^2}{(N-2)^2}\int_{\pa\Om} \frac{\Hcal(\Om)}{N-1} H(\nu)\;d\haus(x),
$$
and hence it holds
\begin{eqnarray} \label{I1}
I&=& \frac{4C^2}{(N-2)^2}\int_{\pa\Om} \frac{\Hcal(\Om)}{N-1} H(\nu)\;d\haus(x)-\frac1R\frac1{N-1}\int_{\{|x|=R\}}v^{1-N}\sdue{W}V_{\xi_i}x_j \nonumber\\
	&& \hspace{6cm}{ + \frac2{N-1}\int_{B^R\setminus\overline{\Om}}v^{1-N}S_2({W})\;dx.}
\end{eqnarray}

On the other hand the value of $I$ can be computed by using (\ref{vnsdue}), the Divergence Theorem and the fact that $\nu=-Dv/|Dv|$, as follows:
\begin{eqnarray*}
I &=&
	\int_{B^R\setminus\overline{\Om}} NV^2v^{-(N+1)}\;dx-\int_{B^R\setminus\overline{\Om}}\diver(v^{-N}V\nxi V)\;dx\\
	&=& \int_{B^R\setminus\overline{\Om}} NV^2v^{-(N+1)}\;dx- \frac 1R\int_{\{|x|=R\}}v^{-N}VV_{\xi_i}x_i\;+\int_{\pa{\Om}}v^{-N}VV_{\xi_i}\frac{v_i}{H(Dv)}H(\nu)\;d\haus(x).
\end{eqnarray*}
The last term can be rewritten by using the boundary conditions in (\ref{Pbv}) in the following way:
$$
\int_{\pa{\Om}}v^{-N}VV_{\xi_i}\frac{v_i}{H(Dv)}H(\nu)\;d\haus(x)=\frac12\Big(\frac{2C}{N-2}\Big)^3\int_{\pa\Om}H(\nu)\;d\haus(x),
$$
which gives
\begin{equation}\label{I2}
I = \int_{B^R\setminus\overline{\Om}} NV^2v^{-(N+1)}\;dx- \frac 1R\int_{\{|x|=R\}}v^{-N}VV_{\xi_i}x_i\;dx+\frac12\Big(\frac{2C}{N-2}\Big)^3 P_H(\Om) \,,
\end{equation}
where we used \eqref{anis_surf_energy}.

Notice that Theorem \ref{thm properties u} implies
\begin{eqnarray*}
&&\lim_{R\to+\infty}\frac1R\frac1{N-1}\int_{\{|x|=R\}}v^{1-N}\sdue{W}V_{\xi_i}x_j\;dx=0\\
%% && \lim_{R\to+\infty}\frac2{N-1}\int_{B^R\setminus\overline{\Om}}v^{1-N}\sdue{W}\;dx=\frac2{N-1}\int_{\rn\setminus\overline{\Om}}v^{1-N}S_2({W})\;dx\\
%% &&\lim_{R\to+\infty} \int_{B^R\setminus\overline{\Om}} NV^2v^{-N-1}\;dx= \int_{\rn\setminus\overline{\Om}} NV^2v^{-N-1}\;dx, \\
&&\lim_{R\to+\infty} \frac 1R\int_{\{|x|=R\}}v^{-N}VV_{\xi_i}x_i\;dx =0.
\end{eqnarray*}
Hence, passing to the limit $R\to+\infty$ in (\ref{I1}) and (\ref{I2}) and coupling them, we find that
\begin{eqnarray} \label{I1=I2}
&&\frac{4C^2}{(N-2)^2}\int_{\pa\Om} \frac{\Hcal(\Om)}{N-1} H(\nu)\;d\haus(x) + \frac2{N-1}\int_{\rn\setminus\overline{\Om}}v^{1-N}S_2({W})\;dx\\
	&& \hspace{2cm}=\int_{\rn\setminus\overline{\Om}} NV^2v^{-N-1}\;dx+\frac12\Big(\frac{2C}{N-2}\Big)^3 P_H(\Om).
\end{eqnarray}

Since $W$ is the product of symmetric matrices, with $\nxi^2V$ positive semidefinite, Newton's Inequality (\ref{newtonIneq}) holds for $W$.
Plugging it in (\ref{I1=I2}) we obtain
\begin{equation} \label{totti}
P_H(\Om)\le \frac{N-2}{C} \int_{\pa\Om} \frac{\Hcal(\Om)}{N-1} H(\nu)\;d\haus(x).
\end{equation}
We use the value of $C$ in (\ref{C=}) and notice that \eqref{totti} is the reverse inequality of (\ref{mink-ineq}); then equality must hold.
Hence the equality sign in (\ref{newtonIneq}) holds true too, which implies that $W$ is a multiple of the identity matrix $Id$, that is
\begin{equation}\label{W=gammaI}
\W{x}=\gamma(x)\;Id,
\end{equation}
for every $x\in\rn\setminus\overline{\Omega}$.

Recalling that $\Delta_Hv=\tr(W)$ and expression (\ref{Hlaplacian-curvatura}), the latter entails
\begin{equation}\label{Hcal=gammaX}
\Hcal(S_t) H(Dv)+H_{\xi_i}H_{\xi_j}v_{ij}= N\gamma,\qquad x\in\rn\setminus\overline{\Omega}.
\end{equation}
Moreover (\ref{W=gammaI}) and \eqref{miracolo} imply that 
$$
v_{ij}=\gamma \frac{\partial^2}{\partial \eta_i\eta_j}V_0(\nabla_{\xi}H(Dv)),
$$
where $V_0(\eta)$ is the dual function of $V(\xi)$, that is $V_0(\eta)=\frac 12H_0^2$.
Hence the following holds
$$
H_{\xi_i}H_{\xi_j}v_{ij}=\gamma \frac{\partial^2}{\partial \eta_i\eta_j}V_0 (\nabla_{\xi}H(Dv))H_{\xi_i}H_{\xi_j}=2\gamma V_0(\nabla_\xi H)=\gamma,
$$
thanks to the homogeneity property (\ref{nablaH.xi}) of $V_0$ and (\ref{H0H=1}).
Then (\ref{Hcal=gammaX}) can then be rewritten as
\begin{equation}\label{Hcal=gamma}
\Hcal(S_t) H(Dv)+\gamma(x)= N\gamma(x) \,,
\end{equation}
for every $x\in\rn\setminus\overline{\Omega}$.

Notice that, {thanks to the regularity result in Theorem \ref{thm properties u}} the function $\gamma$ is constant on  $\pa\Omega$ since 
$$
N\gamma=\frac Nv V(Dv) = \frac N2 \left(\frac{2}{N-2} C \right)^2 \,,
$$ 
from (\ref{Pbv}).
Hence relation (\ref{Hcal=gamma}) implies that $\Hcal(\Omega)$ is constant, being $\Omega$ the level set $S_1$ of $v$.

The proof is concluded thanks to Theorem \ref{AleksandrovThm} which assures that $\Omega$ is a ball in the $H_0$ norm: $\Omega=\bhor{r}$.
%\end{proof}

%%%%%%%%%%%%%%%%%%%%%%%%%%%%%%%%%%%%%%%%%%%%%%%%%%%%%%%%%%%%%%%%%%%%%%%%%%%%%%%%%%%%%%%%%%%%%%%%%%%%%%%%%%%%%%%%%
\appendix

\section{Proof of \eqref{C=}}\label{appendixC}
%%%%%%%%%%%%%%%%%%%%%%%%%%%%%%%%%%%%%%%%%%%%%%%%%%%%%%%%%%%%%%%%%%%%%%%%%%%%%%%%%%%%%%%%%%%%%%%%%%%%%%%%%%%%%%%%%
%%%%
\begin{prop}
If there exists a solution {$u \in C^{2,\alpha}(\rn \setminus \Om)$} of \eqref{pb_capacity}-\eqref{3bis}, then  $C= \frac{N-2}N \frac{P_H(\Om)}{|\Om|}$.
\end{prop}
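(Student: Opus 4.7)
I will establish a Pohozaev-type identity for the anisotropic capacitary potential by constructing an explicit divergence-free vector field in $\rn\setminus\overline\Om$ and extracting $C$ from the resulting boundary integral on $\pa\Om$. Define
$$\mathbf{X}(x) := H^2(Du)\,x \;-\; 2\,\langle x,Du\rangle\,H(Du)\,\nxi H(Du) \;-\; (N-2)\,u\,H(Du)\,\nxi H(Du).$$

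The central step is to verify that $\diver\mathbf X = 0$ in $\rn\setminus\overline\Om$. Since $u$ solves $\Delta_H u = \diver(H(Du)\nxi H(Du)) = 0$, the vector $F := H(Du)\,\nxi H(Du)$ is divergence free, so in $\diver(uF)$ and $\diver(\langle x,Du\rangle F)$ only the pre-factors contribute. Euler's relation $\langle\nxi H(Du),Du\rangle = H(Du)$, the chain rule $\partial_i[H^2(Du)] = 2H\,H_{\xi_j}(Du)\,u_{ij}$, and the symmetry of $D^2u$ give
$$\diver(H^2(Du)\,x) = NH^2 + 2\,x_iHH_{\xi_j}u_{ij},\;\; \diver(\langle x,Du\rangle F) = H^2 + x_iHH_{\xi_j}u_{ij},\;\; \diver(uF) = H^2,$$
and combining with the coefficients $1,-2,-(N-2)$ produces the cancellation $NH^2 - 2H^2 - (N-2)H^2 = 0$. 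This algebraic verification, in which the $1$-homogeneity of $H$ is used essentially, is the only step of the proof requiring more than routine bookkeeping.

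Next I integrate $\diver\mathbf X = 0$ over the annulus $B^R\setminus\overline\Om$ (with $B^R$ a Euclidean ball of radius $R$ containing $\Om$) and apply the divergence theorem. By Theorem \ref{thm properties u}, $u = O(H_0^{2-N})$ and $H(Du) = O(H_0^{1-N})$ at infinity, while $\nxi H$ is $0$-homogeneous and hence bounded; each summand of $\mathbf X$ is therefore $O(R^{3-2N})$ on $\pa B^R$, and the surface-area factor $R^{N-1}$ yields a total contribution $O(R^{2-N})$ which vanishes as $R\to\infty$ since $N\ge 3$. Only the integral over $\pa\Om$ survives, and the outer normal of $B^R\setminus\overline\Om$ on $\pa\Om$ being $-\nu_\Om$, it yields $\int_{\pa\Om}\mathbf X\cdot\nu_\Om\,d\haus = 0$.

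On $\pa\Om$ the overdetermined conditions $u\equiv 1$ and $H(Du) = C$ force $Du = -\lambda\,\nu_\Om$ with $\lambda = C/H(\nu_\Om)$ (since $H(\lambda\nu_\Om)=\lambda H(\nu_\Om)=C$). By the sign-homogeneity of $\nxi H$ together with Euler's identity, $\nxi H(Du) = -\nxi H(\nu_\Om)$ and hence $\langle\nxi H(Du),\nu_\Om\rangle = -H(\nu_\Om)$, while $\langle x,Du\rangle = -C\,\langle x,\nu_\Om\rangle/H(\nu_\Om)$. Substituting into $\mathbf X\cdot\nu_\Om$, the first two terms combine to $-C^2\langle x,\nu_\Om\rangle$ and the third yields $(N-2)\,C\,H(\nu_\Om)$. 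Therefore
$$0 = \int_{\pa\Om}\mathbf X\cdot\nu_\Om\,d\haus = -C^2\!\int_{\pa\Om}\langle x,\nu_\Om\rangle\,d\haus \;+\; (N-2)\,C\!\int_{\pa\Om}H(\nu_\Om)\,d\haus = -C^2\,N|\Om| + (N-2)\,C\,P_H(\Om),$$
using the Euclidean divergence theorem for the first integral and the definition \eqref{anis_surf_energy} for the second. Solving for $C$ gives \eqref{C=}.
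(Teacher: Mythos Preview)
Your proof is correct. The underlying computation is the same Pohozaev identity as in the paper, but you package it more efficiently: you build a single vector field $\mathbf X$ that is \emph{exactly} divergence free in $\rn\setminus\overline\Om$, so a single application of the divergence theorem (plus the decay from Theorem~\ref{thm properties u}) delivers the boundary identity in one stroke. The paper instead proceeds in two steps, first proving $\Hcap(\Om)=C\,P_H(\Om)$ via the coarea formula and the level-set constancy of $\int_{\{u=t\}}H(Du)\langle\nxi H(Du),\nu\rangle\,d\haus$, and then proving $(N-2)\Hcap(\Om)=C^2 N|\Om|$ by a Pohozaev computation starting from $\diver(H^2(Du)\,x)$; eliminating $\Hcap(\Om)$ between the two gives \eqref{C=}. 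Your third summand $-(N-2)\,u\,H\nxi H$ is precisely what absorbs the capacity term that the paper carries as an intermediate, since $\diver(uF)=H^2$ integrates to $\Hcap(\Om)$. The trade-off is that your argument is shorter and avoids introducing $\Hcap(\Om)$ at all, while the paper's route isolates the two identities $\Hcap(\Om)=C\,P_H(\Om)$ and $(N-2)\Hcap(\Om)=C^2N|\Om|$, which have independent interest as anisotropic capacitary formulae. One small point you leave implicit: after obtaining $-C^2N|\Om|+(N-2)C\,P_H(\Om)=0$ you must rule out $C=0$, which follows from Theorem~\ref{thm properties u}(iii) (and $u\in C^2$ up to $\pa\Om$).
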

\begin{proof}
\emph{First step:} $\Hcap(\Om) = C P_H(\Om).$

Since $H$ is 1-homogeneous, $\nu=Du/|Du|$, and from \eqref{3bis}, it holds that
\begin{eqnarray*} 
C\; P_H(\Om) &=& C\int_{\pa\Om}H(\nu)\; d\haus(x)=C\int_{\pa\Om}H(\frac{Du}{|Du|})\; d\haus(x)\\
	&=& \int_{\pa\Om} \frac1{|Du|}H(Du)H(Du)\; d\haus(x) \,,
\end{eqnarray*}
so that
\begin{equation} \label{CP_H}
C\; P_H(\Om) = \int_{\pa\Om} \frac1{|Du|}H^2(Du)\; d\haus(x) \,.
\end{equation}

On the other hand, by using coarea formula and \eqref{nablaH.xi}, we find that
\begin{eqnarray*}
\Hcap(\Om) &=& \int_{\rn\setminus\overline{\Om}}H^2(Du)\;dx= \int_0^1\int_{\{u=t\}}\frac1{|Du|} H^2(Du)\; d\haus(x)\\
	&=& \int_0^1 \int_{\{u=t\}} H(Du) \langle \nxi H(Du) ;  \frac{Du}{|Du|} \rangle  \; d\haus(x) \\
	&=& -\int_0^1 \int_{\{u=t\}} H(Du) \langle \nxi H(Du) ;  \nu \rangle \; d\haus(x).
\end{eqnarray*}
Notice that, since $\Delta_H u=0$ in $\{u\ge t\}\setminus\Om$, by using Divergence Theorem we have that the quantity
$$
\int_{\{u=t\}}  H(Du) \langle \nxi H(Du) ;  \nu \rangle \; d\haus(x)
$$
is independent of the level $t\in(0,1]$.
Hence it holds
$$
\Hcap(\Om) = -\int_0^1 \int_{\{u=1\}} H(Du) \langle \nxi H(Du) ; \nu \rangle \; d\haus(x) =\int_{\pa\Om}\frac{H^2(Du)}{|Du|} \; d\haus(x),
$$
which entails, together with \eqref{CP_H}, that $C\; P_H(\Om)=\Hcap(\Om)$.

\emph{Second step:} $(N-2) \Hcap(\Om)=C^2 N |\Om|$.

By the Divergence Theorem and \eqref{3bis}, we compute 
\begin{eqnarray*}
C^2 N|\Om| &=& \int_{\pa\Om} \langle x ; \nu \rangle \, H^2(Du) \; d\haus(x)= -\int_{\rn\setminus\overline{\Om}}\diver(H^2(Du)x)\;dx\\
	&=& -\int_{\rn\setminus\overline{\Om}}NH^2(Du)\;dx-\int_{\rn\setminus\overline{\Om}} 2V_i(Du)x_j u_{ij}\;dx\\
	&=& -N\Hcap(\Om)-2\left( \int_{\rn\setminus\overline{\Om}} \langle \nxi V(Du) ; D^2u\, x +Du \rangle \; dx - 2 \int_{\rn\setminus\overline{\Om}} 2 V(Du)\;dx\right),
\end{eqnarray*}
where $V=H^2/2$ and the last equality holds thanks to the homogeneity of $V(\cdot)$, which follows from \eqref{nablaH.xi}.

Recalling the definition of H-capacity, the fact that $\diver(\nxi V(Du))=0$ in $\rn\setminus\overline{\Om}$ and that $\nu=-Du/|Du|$ on $\pa\Om$, and by using the homogeneity of $V$, the latter can be rewritten as
\begin{eqnarray*}
C^2 N|\Om| &=& -N\Hcap(\Om)-2 \int_{\rn\setminus\overline{\Om}} \langle \nxi V(Du) ; D(Du\cdot x) \rangle \; dx + 2\Hcap(\Om)\\
	&=& (2-N)\Hcap(\Om)- 2 \int_{\rn\setminus\overline{\Om}} \diver(\nxi V(Du)(Du\cdot x))\;dx\\
	&=& (2-N)\Hcap(\Om) + 2 \int_{\pa\Om} \langle Du ;  x \rangle  \langle \nxi V(Du) ;  \nu \rangle \;d\haus(x)\\
	&=& (2-N)\Hcap(\Om) - 2 \int_{\pa\Om} \langle \frac{Du}{|Du|} ; x\rangle  \langle \nxi V(Du) ;  Du \rangle \;d\haus(x)\\
	&=& (2-N)\Hcap(\Om)+ 2 \int_{\pa\Om} (\nu\cdot x) H^2(Du)\;d\haus(x)\\
	&=& (2-N)\Hcap(\Om)+ 2 C^2 \int_{\pa\Om} \nu\cdot x\;d\haus(x)= (2-N) \Hcap(\Om)+2C^2N|\Om| \,,
\end{eqnarray*}
which completes Step 2. The desired expression of $C$ is achieved by coupling the two steps.
\end{proof}
%%%%%%%%%%%%%%%%%%%%%%%%%%%%%%%%%%%%%%%%%%%%%%%%%%%%%%%%%%%%%%%%%%%%%%%%%%%%%%%%%%%%%%%%%%%%%%%%%%%%%%%%%%%%%%%%%
\section{Proof of Theorem \ref{AleksandrovThm}}\label{appendixB}
%%%%%%%%%%%%%%%%%%%%%%%%%%%%%%%%%%%%%%%%%%%%%%%%%%%%%%%%%%%%%%%%%%%%%%%%%%%%%%%%%%%%%%%%%%%%%%%%%%%%%%%%%%%%%%%%%
	Let $\psi$ be the solution to the following problem:
	\begin{equation}\label{pbAppendix}
	\begin{cases}
	\Delta_H \psi=1&\qquad\text{ in }\Omega,\\
	\psi=0&\qquad\text{ on }\partial\Omega,
	\end{cases}
	\end{equation}
	and let $W=\nabla^2_{\xi}V(D\psi)D^2\psi$, where $V(\xi)=\frac 12 H^2(\xi)$.
	Notice that, thanks to (\ref{divS2=0}), it holds
	$$
	S_2(W)= \frac 12 \diver(\sdue{W}V_{\xi_i}). 
	$$
	The latter, together with Newton's Inequality (\ref{newtonIneq}) and (\ref{pbAppendix}), implies
$$
\frac{N-1}{2N} = \frac{N-1}{2N}(\Delta_H \psi)^2 \ge \frac 12 \diver(\sdue{W}V_{\xi_i}) \,,
$$
for every $x\in\Omega$.
By integrating over $\Omega$, using the Divergence Theorem on the right hand side, using that $\nu=Du/|Du|$ and  (\ref{sdueVivj2}), one obtains that
\begin{eqnarray*}
\frac{N-1}{2N} |\Omega| &\ge& \frac 12 \int_{\partial\Omega}\sdue{W}V_{\xi_i}(D\psi)\nu_j= \frac12 \int_{\partial\Omega}\sdue{W}V_{\xi_i}(D\psi) \frac{\psi_j}{|D\psi|}\\
&=&\frac12 \int_{\partial\Omega}\frac{H^3(D\psi)}{|D\psi|}\Hcal(\Omega) \,,
\end{eqnarray*}
and, since $H$ is 1-homogenous, we find
\begin{equation} \label{appendix4}
\frac{N-1}{2N} |\Omega| \geq \frac 12 \Hcal(\Omega)\int_{\partial\Omega}H(\nu)H^2(D\psi) \,.
\end{equation}
On the other hand by Cauchy-Schwarz inequality it holds
\begin{equation}\label{appendix1}
\left(\int_{\partial\Omega}H(\nu)H(D\psi)\right)^2\le \int_{\partial\Omega}H(\nu)\int_{\partial\Omega}H(\nu)H^2(D\psi),
\end{equation}
and hence, by the definition of $\Delta_H\psi$ and \eqref{nablaH.xi}, we  obtain
$$
|\Omega|=\int_\Omega \Delta_H\psi =\int_\Omega \langle \nabla_\xi V(D\psi); \nu\rangle  = \int_\Omega \frac{H(D\psi)}{|D\psi|}\langle\nabla_\xi H(D\psi); D\psi\rangle \,,
$$
so that
\begin{equation} \label{appendix2}
|\Omega| = \int_\Omega H(\nu)H(D\psi) \,.
\end{equation}

Coupling (\ref{appendix1}) and (\ref{appendix2}) we obtain
$$
|\Omega|^2\le \int_{\partial\Omega}H(\nu)\int_{\partial\Omega}H(\nu)H^2(D\psi).
$$	
Recalling inequality (\ref{appendix4}) and definition (\ref{anis_surf_energy}) we have proved that
\begin{equation}\label{appendix3}
|\Omega|^2\le P_H(\Omega)\frac{|\Omega|}{N}\frac{(N-1)}{\Hcal(\Omega)}.
\end{equation}
Thanks to the anisotropic Minkowski type formula 
$$
P(\Omega)=\int_{\partial\Omega}\frac{\Hcal(\Omega)}{N-1}\langle x; \nu\rangle
$$
(see \cite{HeLi}) and the fact that $\Hcal(\Omega)$ is constant, it holds that
$$
P(\Omega)= N\frac{\Hcal(\Omega)}{N-1}|\Omega| \,,
$$
and hence the equality sign must hold in (\ref{appendix3}).
This entails that equality holds in both Newton and Cauchy-Schwarz inequalities and hence $H(D\psi)$ must be constant on $\partial\Omega$, that is: the overdetermined anisotropic Serrin Problem must be satisfied. Thanks to \cite[Theorem 2.2]{CS}, the set $\Omega$ is then Wulff shape.
%
%By the characterization of the equality case in Newton inequality the matrix $W$ is a multiple of the identy matrix $Id$, that is $W=\la(x)Id$ for every $x\in\Omega$. Hence $1=\Delta_H u=\tr(W)=N\la(x)$ which implies that $\la$ is constant equal to $1/N$.
%Following the argument in (\cite{CS}) in the end of the proof of Theorem 2.2, we prove that $u$ is quadratic with respect to the $H_0$ norm and hence $\Omega$ is Wulff shape.
%The same conclusion holds by considering the equality case in the Cauchy-Schwarz inequality since one obtain that $H(Du)$ must be constant on $\partial\Omega$ and hence the overdetermined anisotropic Serrin Problem holds. Thanks to \cite{CS}, the set $\Omega$ is then Wulff shape.

%%%%%%%%%%%%%%%%%%%%%%%%%%%%%%%%%%%%%%%%%%%%%%%%%
%%%%%%%%%%%%%%%%%%%%%%%%%%%%%%%%%%%%%%%%%%%%%%%%%

\section*{Acknowledgements}
The work has been supported by the FIR project 2013 ``Geometrical and Qualitative aspects of PDE'' and the GNAMPA of the Istituto Nazionale di Alta Matematica (INdAM). Part of this work has been done while the second author was visiting the University of Texas at Austin under the support of NSF-DMS FRG Grant 1361122, of a Oden Fellowship at ICES.

\medskip

Conflict of Interest: The authors declare that they have no conflict of interest.

%%%%%%%%%%%%%%%%%%%%%%%%%%%%%%%%%%%%%%%%%%%%%%%%%
%%%%%%%%%%%%%%%%%%%%%%%%%%%%%%%%%%%%%%%%%%%%%%%%%

\end{document}